\date{\today}
\newcommand{\w}{\omega}
\newtheorem{theorem}{Theorem}
\newtheorem{proposition}[theorem]{Proposition}
\newtheorem{corollary}[theorem]{Corollary}
\newtheorem{lemma}[theorem]{Lemma}
\theoremstyle{definition}
\begin{document}

\title[On locally compact shift-continuous topologies on the  $\alpha$-bicyclic monoid]{On locally compact shift-continuous topologies on the  $\alpha$-bicyclic monoid}

\author[S.~Bardyla]{Serhii~Bardyla}
\address{Faculty of Mathematics, National University of Lviv,
Universytetska 1, Lviv, 79000, Ukraine}
\email{sbardyla@yahoo.com}

\keywords{locally compact semitopological semigroup, bicyclic semigroup, $\alpha$-bicyclic semigroup, shift-continuous topology}

\subjclass[2010]{Primary 20M18, 22A15. Secondary 55N07}

\begin{abstract}
A topology $\tau$ on a monoid $S$ is called {\em shift-continuous} if for every $a,b\in S$ the two-sided shift $S\to S$, $x\mapsto axb$, is continuous. For every ordinal $\alpha\le \omega$, we describe all shift-continuous locally compact Hausdorff topologies on the  $\alpha$-bicyclic monoid $\mathcal{B}_{\alpha}$. More precisely, we prove that the lattice of shift-continuous locally compact Hausdorff topologies on $\mathcal{B}_{\alpha}$ is anti-isomorphic to the segment of $[1,\alpha]$ of ordinals, endowed with the natural well-order. Also we prove that for each ordinal $\alpha$ the $\alpha+1$-bicyclic monoid $\mathcal{B}_{\alpha+1}$ is isomorphic to the Bruck extension of the $\alpha$-bicyclic monoid $\mathcal{B}_{\alpha}$.
\end{abstract}

\maketitle
In this paper all topological spaces are assumed to be Hausdorff. We shall follow the terminology of~\cite{Clifford-Preston-1961-1967,
Engelking-1989, Lawson-1998, Ruppert-1984}. By $\mathbb{N}$ we denote the set of all positive integers and by $\w=\mathbb N\cup\{0\}$ the set of all finite ordinals (= non-negative integer numbers).

A semigroup $S$ is called {\em inverse} if for every $x\in S$ there exists a unique element $x^{-1}\in S$ (called the {\em inverse} of $x$) such that $xx^{-1}x = x$ and $x^{-1}xx^{-1} = x^{-1}$. The map $(\cdot)^{-1}: S\rightarrow S$ which assigns to every $x\in S$ its inverse element $x^{-1}$ is called {\em the inversion}.
A {\it topological} ({\it inverse}) {\it semigroup} is a topological space together with a continuous semigroup operation (and continuous inversion). Obviously, the inversion defined on a topological inverse semigroup is a homeomorphism. If $S$ is an (inverse) semigroup and $\tau$ is a topology on $S$ such that $(S,\tau)$ is a topological (inverse) semigroup, then we shall call $\tau$ an (\emph{inverse}) \emph{semigroup}  \emph{topology} on $S$. A {\it semitopological semigroup} is a topological space together with a separately continuous semigroup operation.

Let $\alpha$ be an arbitrary ordinal and $<$ be the usual order on $\alpha$ (defined by $a<b$ iff $a\in b$). For every $a,b\in \alpha$ we write $a\leq b$ iff either $a=b$ or $a\in b$. Clearly, $\leq$ is a partial order on $\alpha$. By $+$ we will denote the operation of addition of ordinals. 
 For two ordinals $a,b$ with $a>b$ let $a-b$ be a unique ordinal $c$ such that $a=b+c$. For more information on ordinals, see \cite{Kunen}, \cite{SierpWacl-1965} or \cite{Weiss}.

By \cite[Theorem 17]{Weiss} each ordinal $x$ can be uniquely written in its Cantor's normal form as $$x=n_{1}\omega^{t_{1}}+n_{2}\omega^{t_{2}}+...+n_{p}\omega^{t_{p}},$$
where $n_{i}$ are positive integers and $t_{1},t_{2},...,t_{p}$ is a decreasing sequence of ordinals.

In this paper for us will be convenient to use the following modified Cantor's normal form for ordinals $x<\omega^{\alpha+1}$:
\begin{equation*}
x=n_{1}\omega^{\alpha}+n_{2}\omega^{t_{2}}+\dots+n_{p}\omega^{t_{p}},
\end{equation*}
where $n_{1}\in\w$, $n_{2},\dots,n_{p}\in\mathbb N$ and $\alpha,t_{2},t_{3},\dots,t_{p}$ is a decreasing sequence of ordinals. Obviously that each ordinal $x<\omega^{\alpha+1}$ has a unique modified Cantor's normal form. Later on we denote the tail $n_{2}\omega^{t_{2}}+\dots+n_{p}\omega^{t_{p}}$ of the modified Cantor's normal form of the ordinal $x$ by $x^{*}$. If $x=n\omega^{\alpha}$ for some non-negative integer $n$, then $x^{*}=0$.


The bicyclic monoid ${\mathscr{B}}(p,q)$ is the semigroup with the identity $1$ generated by two elements $p$ and $q$ connected by the relation $pq=1$.
The bicyclic semigroup is isomorphic to the set $\mathbb{N}\times\mathbb{N}$ endowed with the binary operation:
\begin{equation*}
    (a,b)\cdot (c,d)=
    \left\{
      \begin{array}{cl}
        (a+c-b,d), & \hbox{if~~} b\leq c;\\
        (a,d+b-c),   & \hbox{if~~} b>c.
      \end{array}
    \right.
\end{equation*}
It is well known that the bicyclic monoid ${\mathscr{B}}(p,q)$ is a bisimple (and hence simple) combinatorial $E$-unitary inverse semigroup and every non-trivial congruence on ${\mathscr{B}}(p,q)$ is a group congruence \cite{Clifford-Preston-1961-1967}. Also, a classic Andersen Theorem states that \emph{a simple semigroup $S$ with an idempotent is completely simple if and only if $S$ does not contain an isomorphic copy of the bicyclic semigroup} (see \cite{Andersen-1952} and \cite[Theorem~2.54]{Clifford-Preston-1961-1967}).

The bicyclic semigroup does not admit non-discrete semigroup topologies and if a topological semigroup $S$ contains the bicyclic semigroup
as a dense subsemigroup, then $S\setminus{\mathscr{B}}(p,q)$ is a closed ideal in $S$ (see \cite{Eberhart-Selden-1969}). In \cite{Bertman-West-1976} this result was extended to semitopological bicyclic semigroups.
In~\cite{Gutik-2015} it was proved that a Hausdorff locally compact semitopological bicyclic semigroup with adjoined zero is either compact or discrete.

Let $(S,\star)$ be an arbitrary semigroup. The {\em Bruck extension} $B(S)$ of the semigroup $S$ (see \cite{Bruck-1958}) is the set $\mathbb{N}\times S\times\mathbb{N}$ endowed with the following semigroup operation:
\begin{equation*}
    (a,s,b)\cdot (c,t,d)=
    \left\{
      \begin{array}{cl}
        (a+c-b,t,d), & \hbox{if~~} b<c;\\
        (a,s,d+b-c),   & \hbox{if~~} b>c;\\
        (a,s\star t,d),    & \hbox{if~~} b=c.
      \end{array}
    \right.
\end{equation*}

For numbers $n,m\in\w$ by $[n,m]$ we denote the set
$\{(n,s,m):s\in S\}\subset B(S)$, called a {\em box}. By $B^{0}(S)$ we denote the Bruck extension of the semigroup $S$ with adjoined zero.

Possible topologizations of the Bruck extension of a topological semigroup were investigated in \cite{Gutik-Bruck1} and \cite{Gutik-Bruck2}. In~\cite{Gutik-Pavlyk-2009} a topologization of the Bruck-Reilly extension of a semitopological semigroup was studied. A locally compact Bruck extension of a compact topological group was investigated in~\cite{Gutik-2017}.

Among other important generalizations of bicyclic semigroup let us mention polycyclic monoids and $\alpha$-bicyclic monoids.
Polycyclic monoids were introduced in \cite{Nivat-Perrot-1970}. Algebraic and topological properties of the polycyclic monoids were investigated in \cite{Jones-2011, Jones-Lawson-2014, BardGut-2016(1), BardGut-2016(2)}. The paper \cite{Mesyan-Mitchell-Morayne-Peresse-2013} is devoted to topological properties of the graph inverse semigroups which are the generalizations of polycyclic monoids. In that paper it was shown that for every finite graph $E$ every locally compact semigroup topology on the graph inverse semigroup over $E$ is discrete, which implies that for every positive integer $n$ every locally compact semigroup topology on the $n$-polycyclic monoid is discrete. In~\cite{Bardyla-2017} this result was extended up to a characterization of graph inverse semigroups admitting only discrete locally compact semigroup topology. In \cite{Bardyla-2016(1)} it was proved that for each cardinal $\lambda$ a locally compact semitopological $\lambda$-polycyclic monoid is either compact or discrete.

In this paper we are mostly concerned with $\alpha$-bicyclic monoids. These monoids were introduced in \cite{Hogan-1973} and play a crucial role in the structure of bisimple semigroups with well ordered set of  idempotents.

For an ordinal $\alpha$ the {\em $\alpha$-bicyclic monoid} $\mathcal{B_{\alpha}}$ is the set $\omega^{\alpha}\times\omega^{\alpha}$ endowed with the binary operation
\begin{equation*}
    (a,b)\cdot (c,d)=
    \left\{
      \begin{array}{cl}
        (a+(c-b),d), & \hbox{if~~} b\leq c;\\
        (a,d+(b-c)),   & \hbox{if~~} b>c.
      \end{array}
    \right.
\end{equation*}

Observe that $\mathcal{B}_{0}$ is a singleton and $\mathcal{B}_{1}$ is isomorphic to the bicyclic semigroup $\mathscr B(p,q)$.


In \cite{Selden-1985} a non-discrete inverse semigroup topology on $\mathcal{B}_{2}$ was constructed. Inverse semigroup topologies on $\mathcal{B}_{\alpha}$ were investigated in~\cite{Hogan 1987}. In~\cite{Bardyla-2016} it was proved that $\alpha$-bicyclic monoid $\mathcal{B}_{\alpha}$ is algebraically isomorphic to the semigroup of all order isomorphisms between the principal filters of the ordinal $\omega^{\alpha}$.
Also in~\cite{Bardyla-2016} it was proved that for every ordinal $\alpha\le \omega$ there exists only discrete locally compact semigroup topology on $\mathcal{B}_{\alpha}$, and an example of a non-discrete locally compact inverse semigroup topology on $\mathcal{B}_{\omega+1}$ was constructed.




A topology $\tau$ on a monoid $S$ is called {\em shift-continuous} if for every $a,b\in S$ the two-sided shift $S\to S$, $x\mapsto axb$, is continuous. Observe that a topology $\tau$ on a monoid $S$ is shift-continuous if and only if the semigroup operation $S\times S\to S$, $(x,y)\mapsto xy$, is separately continuous if and only if $(S,\tau)$ is a semitopological semigroup. We recall that all topologies considered in this paper are Hausdorff.

In this paper for every ordinal $\alpha\le\omega$ we describe all shift-continuous locally compact topologies on the $\alpha$-bicyclic monoid $\mathcal{B}_{\alpha}$ for $\alpha\le\w$. In particular, we prove that the lattice of shift-continuous locally compact topologies on $\mathcal{B}_{\alpha}$ is anti-isomorphic to the segment of ordinals $[1,\alpha]$.  We start with the following theorem.

\begin{theorem}\label{theorem1}
For every ordinal $\alpha$ the $\alpha+1$-bicyclic monoid $\mathcal{B}_{\alpha+1}$ is isomorphic to the Bruck extension $B(\mathcal B_\alpha)$ of the $\alpha$-bicyclic monoid $\mathcal B_\alpha$.
\end{theorem}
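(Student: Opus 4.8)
The plan is to write down an explicit isomorphism built from the modified Cantor normal form recalled in the introduction. For $x<\omega^{\alpha+1}$ write $x=x_1\omega^{\alpha}+x^{*}$ with $x_1\in\w$ and $x^{*}<\omega^{\alpha}$. I would then define $\Phi\colon\mathcal B_{\alpha+1}\to B(\mathcal B_\alpha)$ by
\[
\Phi(a,b)=(a_1+1,\,(a^{*},b^{*}),\,b_1+1),
\]
where $a=a_1\omega^{\alpha}+a^{*}$ and $b=b_1\omega^{\alpha}+b^{*}$. Since the indices of the Bruck extension range over $\mathbb N$ while $a_1,b_1$ range over $\w$, the shift by $1$ is needed to land in $\mathbb N\times(\mathcal B_\alpha)\times\mathbb N$. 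The map $\Phi$ is a bijection: its inverse sends $(m,(s,t),n)$ to $\big((m-1)\omega^{\alpha}+s,\,(n-1)\omega^{\alpha}+t\big)$, and uniqueness of the modified Cantor normal form makes this well defined and a two-sided inverse.

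The entire content is then to verify that $\Phi$ is a homomorphism, and the one ordinal-arithmetic fact I would use is that $\omega^{\alpha}$ is additively indecomposable: $\gamma+\omega^{\alpha}=\omega^{\alpha}$ for every $\gamma<\omega^{\alpha}$, and equivalently $\gamma,\delta<\omega^{\alpha}$ implies $\gamma+\delta<\omega^{\alpha}$. With this I would first record the shape of the ordinal difference in the relevant cases. If $b_1<c_1$ then $c-b=(c_1-b_1)\omega^{\alpha}+c^{*}$ (the tail $b^{*}$ being absorbed by the block $(c_1-b_1)\omega^{\alpha}$), while if $b_1=c_1$ and $b^{*}\le c^{*}$ then $c-b=c^{*}-b^{*}$; symmetric formulas hold for $b-c$ when $b>c$. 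Substituting into the $\mathcal B_{\alpha+1}$-product and again absorbing the lower tails shows, for example in the case $b\le c$ with $b_1<c_1$, that $a+(c-b)=(a_1+c_1-b_1)\omega^{\alpha}+c^{*}$, so its high part is $a_1+c_1-b_1$ and its tail is $c^{*}$.

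With these identities in hand the check splits along the three Bruck cases, which correspond exactly to the trichotomy of $b_1,c_1$. When $b_1<c_1$ we are in the first Bruck case and in the $\mathcal B_{\alpha+1}$-case $b\le c$; comparing high parts and tails gives $\Phi(a,b)\Phi(c,d)=(a_1+c_1-b_1+1,(c^{*},d^{*}),d_1+1)=\Phi\big((a,b)(c,d)\big)$. The case $b_1>c_1$ is symmetric, falling into the second Bruck case and the $\mathcal B_{\alpha+1}$-case $b>c$. The interesting case is $b_1=c_1$: here the Bruck product lands in the third (diagonal) case and multiplies the middle coordinates inside $\mathcal B_\alpha$, returning $(a_1+1,(a^{*},b^{*})\cdot(c^{*},d^{*}),d_1+1)$, whereas on the $\mathcal B_{\alpha+1}$-side the outcome is governed by the further comparison of the tails $b^{*}$ and $c^{*}$. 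One verifies that the $\mathcal B_\alpha$-product $(a^{*},b^{*})\cdot(c^{*},d^{*})$ reproduces precisely the tail coming from $(a,b)(c,d)$, whether $b^{*}\le c^{*}$ or $b^{*}>c^{*}$, so the two sides agree.

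The genuine obstacle is purely the ordinal bookkeeping: one must track when a lower-order tail is absorbed by a block $k\omega^{\alpha}$ and when it survives, and confirm that the coarse trichotomy in $b_1,c_1$ together with the inner $\mathcal B_\alpha$-multiplication on the tails reproduces the single comparison $b\le c$ versus $b>c$ in $\mathcal B_{\alpha+1}$ (note the asymmetry: $\mathcal B_{\alpha+1}$ uses $\le$ and $>$, while the Bruck operation splits into $<$, $=$, and $>$, so the diagonal $b_1=c_1$ case is precisely where the middle-coordinate product must do the remaining work). Once the absorption identities are isolated as above, each of the finitely many cases reduces to rewriting both products in modified Cantor normal form and matching the high parts, the tails, and the shifted indices.
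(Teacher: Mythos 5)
Your proposal is correct and follows essentially the same route as the paper: the explicit bijection via modified Cantor normal forms, with the homomorphism property checked by the trichotomy on the leading coefficients $b_1,c_1$ and the inner $\mathcal B_\alpha$-product handling the diagonal case, is exactly the paper's argument. The only difference is your $+1$ shift of the indices, which merely reconciles the paper's convention that the Bruck extension is indexed by $\mathbb N$ with the fact that the leading coefficients range over $\w$ (the paper silently treats the index set as $\w$), and your explicit isolation of the absorption identity $\gamma+\omega^{\alpha}=\omega^{\alpha}$ for $\gamma<\omega^{\alpha}$, which the paper leaves implicit.
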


\begin{proof}
Define an isomorphism $f:\mathcal{B}_{\alpha+1}\rightarrow B(\mathcal{B}_{\alpha})$ by the formula:
\begin{equation*}
 f((a,b))=(n_{1},(a^{*},b^{*}),m_{1}),
\end{equation*}
where $a=n_{1}\omega^{\alpha}+a^{*}$ and $b=m_{1}\omega^{\alpha}+b^{*}$ are modified Cantor's normal forms of ordinals $a,b\in\omega^{\alpha+1}$. 
Since each ordinal $a<\omega^{\alpha+1}$ has a unique modified Cantor's normal form, the map $f$ is a bijection.
Now we are going to show that $f$ is a homomorphism.
Let
$$a=n_{1}\omega^{\alpha}+a^{*},\quad b=m_{1}\omega^{\alpha}+b^{*},$$
$$c=k_{1}\omega^{\alpha}+c^{*},\quad d=o_{1}\omega^{\alpha}+d^{*}$$
be arbitrary ordinals which are less then $\omega^{\alpha+1}$. Observe that
$$f((a,b))=(n_{1},(a^{*},b^{*}),m_{1})\;\;\mbox{and}\;\;
f((c,d))=(k_{1},(c^{*},d^{*}),o_{1}).$$
There are four cases to consider:\\
$(i)$ if $k_{1}>m_{1}$ then
$$f((a,b))\cdot f((c,d))=(n_{1}+k_{1}-m_{1},(c^{*},d^{*}),o_{1})=
f(((n_{1}+k_{1}-m_{1})\omega^{\alpha}+c^{*},o_{1}\omega^{\alpha}+d^{*}))=f((a,b)\cdot(c,d));\\
$$
$(ii)$ if $k_{1}<m_{1}$ then
$$f((a,b))\cdot f((c,d))=(n_{1},(a^{*},b^{*}),o_{1}+m_{1}-k_{1})=
f((n_{1}\omega^{\alpha}+a^{*},(o_{1}+m_{1}-k_{1})\omega^{\alpha}+b^{*}))=f((a,b)\cdot(c,d));\\
$$
$(iii)$ if $k_{1}=m_{1}$ and $c^{*}>b^{*}$ then
$$f((a,b))\cdot f((c,d))=(n_{1},(a^{*}+(c^{*}-b^{*}),d^{*}),o_{1})=
f((n_{1}\omega^{\alpha}+a^{*}+(c^{*}-b^{*}),o_{1}\omega^{\alpha}+d^{*})=f((a,b)\cdot(c,d));\\
$$
$(iv)$ if $k_{1}=m_{1}$ and $c^{*}\leq b^{*}$ then
$$f((a,b))\cdot f((c,d))=(n_{1},(a^{*},d^{*}+(b^{*}-c^{*})),o_{1})=f((n_{1}\omega^{\alpha}+a^{*},o_{1}\omega^{\alpha}+d^{*}+(b^{*}-c^{*}))=f((a,b)\cdot(c,d)).\\
$$
Hence the map $f$ is an isomorphism between $\mathcal{B}_{\alpha+1}$ and $B(\mathcal{B}_{\alpha})$.
\end{proof}

\begin{lemma}\label{lemma1}
Let $\alpha$ be an ordinal.
For any element $(a,b)=(a_{1}\omega^{n_{1}}+\dots+a_{m}\omega^{n_{m}}, b_{1}\omega^{k_{1}}+\dots+b_{t}\omega^{k_{t}})$
of the $\alpha$-bicyclic monoid $\mathcal{B}_{\alpha}$ the set
$$V_{(a,b)}:=\{(a,b)\}\cup\{(a_{1}\omega^{n_{1}}+\dots+(a_{m}-1)\omega^{n_{m}}+\delta, b_{1}\omega^{k_{1}}+\dots+(b_{t}-1)\omega^{k_{t}}+\beta):\delta<\omega^{n_{m}}\hbox{ and }\beta<\omega^{k_{t}}\}$$
is a neighborhood of the point $(a,b)$ in any shift-continuous  topology $\tau$ on $\mathcal B_\alpha$.
\end{lemma}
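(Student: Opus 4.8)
The plan is to prove the stronger statement that $(a,b)$ has an open neighbourhood contained in $V_{(a,b)}$, i.e. that $(a,b)\notin\overline{\mathcal{B}_{\alpha}\setminus V_{(a,b)}}$. Throughout I would use only that every left translation $x\mapsto ux$, every right translation $x\mapsto xv$, and hence every two-sided shift $x\mapsto uxv$, is $\tau$-continuous, together with the fact that a Hausdorff space is $T_1$, so singletons are closed. The engine is the following separation principle: if $A\subseteq\mathcal{B}_\alpha$ and there is a continuous shift $h$ with $h(A)$ equal to a single point distinct from $h(a,b)$, then $h(a,b)\notin\overline{h(A)}$, whence $(a,b)\notin\overline{A}$. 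It therefore suffices to cover $\mathcal{B}_\alpha\setminus V_{(a,b)}$ by finitely many pieces, each separated from $(a,b)$ in this way.

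Writing $a_-,b_-$ for the ordinals obtained by deleting the last unit of the smallest Cantor term, so that $a=a_-+\omega^{n_m}$ and $b=b_-+\omega^{k_t}$, one checks that $V_{(a,b)}=\{(a,b)\}\cup\big([a_-,a)\times[b_-,b)\big)$. I would then decompose the complement according to the position of each coordinate relative to the thresholds $a_-<a$ and $b_-<b$ (a nine-cell partition whose central cell $[a_-,a)\times[b_-,b)$ lies in $V_{(a,b)}$ and whose upper-right cell meets $V_{(a,b)}$ only in the point $(a,b)$). For the cells in which some coordinate lies strictly below $a_-$ or $b_-$, I would collapse the cell to a single point by an absorbing two-sided shift of the form $x\mapsto(0,s)\,x\,(s',0)$, exploiting additive indecomposability $x+\omega^{\gamma}=\omega^{\gamma}$ for $x<\omega^{\gamma}$, and then separate by the $T_1$ axiom. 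For the ``band'' cells (one coordinate in $[a_-,a)$ or $[b_-,b)$ and the other large) I would instead apply the retraction $\Pi\colon x\mapsto(a,a)\,x\,(b,b)$, which fixes $(a,b)$ and sends each such cell into the principal filter $C=\{(x,y):x\ge a,\ y\ge b\}$ with image strictly above $(a,b)$; this reduces the band cells to the upper-right cell.

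The remaining piece, $C\setminus\{(a,b)\}$, is the hardest, and I expect it to be the main obstacle. On $C$ every shift is injective, so the collapsing principle is unavailable. Here I would use the shift $\psi\colon(x,y)\mapsto(a,0)\,(x,y)\,(0,b)$, which (via additive indecomposability of $\omega^\alpha$) is a homeomorphism of $\mathcal{B}_\alpha$ onto the subspace $C$, with continuous inverse $\phi\colon(x,y)\mapsto(0,a)\,(x,y)\,(b,0)$ restricted to $C$, and which sends the identity $\mathbf{1}=(0,0)$ to $(a,b)$. Through $\psi$ the assertion ``$(a,b)\notin\overline{C\setminus\{(a,b)\}}$'' becomes ``$\mathbf{1}$ is isolated in $\mathcal{B}_\alpha$'', and this isolation of the identity is the crux. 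I would attack it by assuming a net $x_\gamma\to\mathbf{1}$ with $x_\gamma\neq\mathbf{1}$ and feeding it through the identity-fixing shifts $G_\xi\colon x\mapsto(0,\xi)\,x\,(\xi,0)$ (which satisfy $G_\xi(\mathbf{1})=\mathbf{1}$ and shift both coordinates down by $\xi$ on $\{(x,y):x>\xi,\ y>\xi\}$) to contradict uniqueness of Hausdorff limits.

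Two points need care. First, the collapses in the lower cells are not uniform: since $a_-$ and $b_-$ need not be additively indecomposable, a single absorbing shift may fail to send an entire cell to one point, and one must peel off the Cantor terms of $a$ and $b$ one scale at a time, giving an induction on the length of the Cantor normal form. Second, all the shift computations rest on the delicate left/right ordinal subtraction built into the multiplication of $\mathcal{B}_\alpha$; tracking which differences are absorbed is precisely what forces $V_{(a,b)}$ to contain only the immediate sub-block $[a_-,a)\times[b_-,b)$ rather than anything deeper, and checking this bookkeeping is the routine but lengthy part of the argument.
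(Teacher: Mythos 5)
Your plan is much more ambitious than the paper's proof, which is a short reduction: it applies the shift $x\mapsto(0,a_{1}\omega^{n_{1}}+\dots+(a_{m}-1)\omega^{n_{m}})\cdot x\cdot(b_{1}\omega^{k_{1}}+\dots+(b_{t}-1)\omega^{k_{t}},0)$ carrying $(a,b)$ to $(\omega^{n_{m}},\omega^{k_{t}})$, and then quotes Lemma~3, Lemma~5 and Proposition~7 of \cite{Bardyla-2016} for the facts that $\{(\omega^{n},\omega^{n})\}\cup\{(x,y):x,y<\omega^{n}\}$ is a neighbourhood of $(\omega^{n},\omega^{n})$ and that the points with $n_{m}\neq k_{t}$ or $n_{m}=k_{t}=0$ are isolated. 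You are in effect re-proving those cited inputs from scratch, and three steps of your plan do not close. First, a cell with one coordinate below your threshold $a_-$ and the other coordinate unbounded, such as $\{(c,d):c<a_-,\ d\ge b\}$, cannot be collapsed to a single point by any two-sided shift: for fixed $c$ the map $d\mapsto u\cdot(c,d)\cdot v$ is injective once $d$ exceeds the relevant coordinates of $u$ and $v$, so its image is infinite, and peeling off Cantor terms does not change this. (These cells are in fact sent by your retraction $\Pi$ into $C\setminus\{(a,b)\}$, so the defect is in the assignment of cells to methods rather than in the existence of an argument, but as written the step fails.) Second, the crux --- isolation of the identity --- is only gestured at, and the maps $G_{\xi}$ are the wrong tool: they are continuous and fix $(0,0)$, so $G_{\xi}(x_{\gamma})\to(0,0)$ for every net $x_{\gamma}\to(0,0)$ and no second limit is produced; moreover they are badly non-injective (e.g.\ $(0,\omega)\cdot(1,0)\cdot(\omega,0)=(0,0)$). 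What does work is one-sided multiplication by the idempotent $(1,1)$: the map $x\mapsto(1,1)x$ fixes every $(c,d)$ with $c\ge1$ and sends $(0,0)$ to $(1,1)$, so by uniqueness of limits a net converging to $(0,0)$ must eventually have first coordinate $0$, and symmetrically second coordinate $0$, a contradiction. Third, the induction for the remaining cell $[0,a_-)\times[0,b_-)$ is announced but not performed, and it is genuinely needed: for $(a,b)=(\omega\cdot2,\omega\cdot3)$ your $\Pi$ sends every point $(0,\omega+k)$ with $k\in\omega$ to $(a,b)$, so this cell cannot be delegated to the retraction.

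All three difficulties disappear if you perform the paper's reduction first: after shifting $(a,b)$ to $(\omega^{n_{m}},\omega^{k_{t}})$ the thresholds $a_-,b_-$ are $0$, the lower cells are empty, and the additive indecomposability of $\omega^{n_{m}}$ makes the single collapse $x\mapsto(\omega^{n_{m}},\omega^{n_{m}})\cdot x\cdot(\omega^{n_{m}},\omega^{n_{m}})$ identify exactly the set $V_{(\omega^{n_{m}},\omega^{n_{m}})}$ with the fixed point. The pieces of your plan that are sound --- the homeomorphism $\psi$ onto the principal filter and the reduction of its complement to the isolation of the identity --- then suffice to finish.
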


\begin{proof}
If $n_{m}\neq k_{t}$ or $n_{m}=k_{t}=0$, then by \cite[Lemma 5]{Bardyla-2016} and \cite[Proposition 7]{Bardyla-2016} the point $(a,b)$ is isolated in the semitopological monoid $(\mathcal B_\alpha,\tau)$ and there is noting to prove.
Suppose that $n_{m}=k_{t}\neq 0$.
By~\cite[Lemma 3]{Bardyla-2016} the set $$U_{(\omega^{n_{m}},\omega^{n_{m}})}:=\{(\omega^{n_{m}},\omega^{n_{m}})\}\cup \{(x,y): x<\omega^{n_{m}}\hbox{ and }y<\omega^{n_{m}}\}$$ is a neighborhood of the point $(\omega^{n_{m}},\omega^{n_{m}})$. Since $$(0,a_{1}\omega^{n_{1}}+\dots+(a_{m}-1)\omega^{n_{m}})\cdot(a,b)\cdot(b_{1}\omega^{k_{1}}+\dots+(b_{t}-1)\omega^{n_{m}},0)=(\omega^{n_{m}},\omega^{n_{m}})$$ the shift-continuity of the topology $\tau$ yields a neighborhood $V\in\tau$ of $(a,b)$ such that
$$(0,a_{1}\omega^{n_{1}}+..+(a_{m}-1)\omega^{n_{m}})\cdot V\cdot(b_{1}\omega^{k_{1}}+..+(b_{t}-1)\omega^{n_{m}},0)\subset U_{(\omega^{n_{m}},\omega^{n_{m}})}.$$
The routine calculations show that $V\subset V_{(a,b)}$.
\end{proof}

For an  element $(a,b)=(a_{1}\omega^{n_{1}}+..+a_{m}\omega^{n_{m}},b_{1}\omega^{k_{1}}+..+b_{t}\omega^{k_{t}})$ of $\mathcal{B}_{\alpha}$
define a map
$h_{(a,b)}:\mathcal{B}_{\alpha}\rightarrow\mathcal{B}_{\alpha}$
by the formula:
$$h_{(a,b)}(x,y)=(a_{1}\omega^{n_{1}}+a_{2}\omega^{n_{2}}+\dots+(a_{m}-1)\omega^{n_{m}},0)\cdot(x,y)
\cdot(0,b_{1}\omega^{k_{1}}+b_{2}\omega^{k_{2}}+\dots+(b_{t}-1)\omega^{k_{t}}).$$
It is easy to see that $h_{(a,b)}(\omega^{n_{m}},\omega^{k_{t}})=(a,b)$.

\begin{proposition}\label{proposition1} For any shift-continuous  topology $\tau$ on $\mathcal{B}_{\alpha}$ and any element $(a,b)=(a_{1}\omega^{n_{1}}+\dots+a_{m}\omega^{n_{m}},
b_{1}\omega^{k_{1}}+\dots+b_{t}\omega^{k_{t}})$ of $\mathcal{B}_{\alpha}$, the restriction of the map $h_{(a,b)}$ to the set $$A=\{(x,y):x<\omega^{n_{m}},y<\omega^{k_{t}}\}\cup\{(\omega^{n_{m}},\omega^{k_{t}})\}$$
is a homeomorphism of $A$ and the set $$B=\{(a-\omega^{n_{m}}+\gamma,a-\omega^{k_{t}}+\delta):
\gamma<\omega^{n_m}\hbox{ and }\delta<\omega^{k_{t}}\}\cup\{(a,b)\},$$
endowed with the subspace topologies inherited from the topology $\tau$.
\end{proposition}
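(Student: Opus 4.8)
The plan is to exhibit $h_{(a,b)}$ itself as a two-sided shift, hence automatically continuous by the shift-continuity of $\tau$, and then to produce an explicit continuous inverse, again realized as a two-sided shift. Write $p=a_{1}\omega^{n_{1}}+\dots+(a_{m}-1)\omega^{n_{m}}$ and $q=b_{1}\omega^{k_{1}}+\dots+(b_{t}-1)\omega^{k_{t}}$, so that $a=p+\omega^{n_{m}}$ and $b=q+\omega^{k_{t}}$, and $h_{(a,b)}(x,y)=(p,0)\cdot(x,y)\cdot(0,q)$. Since $z\mapsto (p,0)\cdot z\cdot(0,q)$ is a two-sided shift, $h_{(a,b)}$ is continuous on all of $\mathcal{B}_{\alpha}$, and so is its restriction to $A$.

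First I would carry out the elementary computation of $h_{(a,b)}$ directly from the multiplication table of $\mathcal{B}_{\alpha}$. One checks that $(p,0)\cdot(x,y)=(p+x,y)$ for every $(x,y)$, and then that $(p+x,y)\cdot(0,q)=(p+x,q+y)$; hence $h_{(a,b)}(x,y)=(p+x,q+y)$. On $A$ this sends a point $(x,y)$ with $x<\omega^{n_{m}}$, $y<\omega^{k_{t}}$ to $(p+x,q+y)\in B$ (taking $\gamma=x$, $\delta=y$, so that $q+\delta$ is exactly the intended $b-\omega^{k_{t}}+\delta$), and it sends the distinguished point $(\omega^{n_{m}},\omega^{k_{t}})$ to $(p+\omega^{n_{m}},q+\omega^{k_{t}})=(a,b)$. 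Left cancellation of ordinal addition then shows that $h_{(a,b)}$ restricts to a bijection of $A$ onto $B$.

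For the inverse, I would guess the complementary shift $g(u,v):=(0,p)\cdot(u,v)\cdot(q,0)$, which is again continuous by shift-continuity. The key step is to check that $g$ undoes $h_{(a,b)}$ on $B$: writing $u=p+\gamma$, $v=q+\delta$ for a point of $B$, one computes $(0,p)\cdot(u,v)=(\gamma,v)$ (using $p\le u$), and then $(\gamma,v)\cdot(q,0)=(\gamma,\delta)$ (distinguishing the subcases $v=q$ and $v>q$), so that $g(u,v)=(u-p,v-q)$. Thus $g$ maps $B$ into $A$ and $g|_{B}=\bigl(h_{(a,b)}|_{A}\bigr)^{-1}$. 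Since $g|_{B}$ is the restriction of the continuous map $g$, the inverse of $h_{(a,b)}|_{A}$ is continuous, and therefore $h_{(a,b)}|_{A}\colon A\to B$ is a homeomorphism.

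The only real work is the ordinal bookkeeping: the multiplication of $\mathcal{B}_{\alpha}$ splits into the cases $b\le c$ and $b>c$, so both computations above require tracking which case occurs, and the identification $g|_{B}=\bigl(h_{(a,b)}|_{A}\bigr)^{-1}$ relies on the existence and uniqueness of the ordinal difference together with left cancellation. I do not expect to need Lemma~\ref{lemma1} here; the argument is driven entirely by realizing both maps as shifts and verifying algebraically that they are mutually inverse on $A$ and $B$.
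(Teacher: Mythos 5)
Your proposal is correct and follows essentially the same route as the paper: the paper likewise observes that $h_{(a,b)}$ is continuous as a two-sided shift and then introduces the complementary shift $h_{(a,b)}^{-1}(x,y)=(0,a_{1}\omega^{n_{1}}+\dots+(a_{m}-1)\omega^{n_{m}})\cdot(x,y)\cdot(b_{1}\omega^{k_{1}}+\dots+(b_{t}-1)\omega^{k_{t}},0)$ as the continuous inverse on $B$. The only difference is that you write out the ordinal arithmetic (the identities $h_{(a,b)}(x,y)=(p+x,q+y)$ and $g(u,v)=(u-p,v-q)$) which the paper dismisses as obvious.
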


\begin{proof}
Obviously, the map $h_{(a,b)}:\mathcal{B}_{\alpha}\rightarrow\mathcal{B}_{\alpha}$ is continuous and $h_{(a,b)}|A:A\to B$ is a bijection.
Consider the map $h_{(a,b)}^{-1}:\mathcal{B}_{\alpha}\rightarrow\mathcal{B}_{\alpha}$ defined by
$$h_{(a,b)}^{-1}(x,y)=(0,a_{1}\omega^{n_{1}}+a_{2}\omega^{n_{2}}+\dots+(a_{m}-1)\omega^{n_{m}})
\cdot(x,y)\cdot(b_{1}\omega^{k_{1}}+b_{2}\omega^{k_{2}}+\dots+(b_{t}-1)\omega^{k_{t}},0)$$
The shift-continuity of the topology $\tau$ implies the continuity of the map $h_{(a,b)}^{-1}:\mathcal{B}_{\alpha}\rightarrow\mathcal{B}_{\alpha}$.
Moreover, the restriction of $h_{(a,b)}^{-1}|B$ is the inverse map to  $h_{(a,b)}|A$, so $h_{(a,b)}|A:A\to B$ is a homeomorphism.
\end{proof}

Further by $B^{0}(\mathcal{B}_{\alpha})=B(\mathcal B_\alpha)\cup\{0^*\}$ we denote the Bruck extension of the semigroup $\mathcal{B}_{\alpha}$ with adjoined zero, denoted by  $0^{*}$.

\begin{lemma}\label{lemma2}
For every numbers $n,m\in\w$ the set $[n,m]\cup\{(n+1,(0,0),m+1)\}$ is open-and-closed in any  shift-continuous topology $\tau$ on $B^{0}(\mathcal{B}_{\alpha})$.
\end{lemma}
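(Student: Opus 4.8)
The plan is to exhibit $F:=[n,m]\cup\{(n+1,(0,0),m+1)\}$ as both closed and open, using only that every one- and two-sided shift of the semitopological monoid $(B^{0}(\mathcal B_\alpha),\tau)$ is continuous and that the space is Hausdorff. The central device is the continuous two-sided shift
\[
g\colon B^{0}(\mathcal B_\alpha)\to B^{0}(\mathcal B_\alpha),\qquad g(x)=(0,(0,0),n+1)\cdot x\cdot (m+1,(0,0),0).
\]
A direct computation with the Bruck multiplication shows that $g$ collapses all of $F$ onto the single point $(0,(0,0),0)$ (the identity of $B(\mathcal B_\alpha)\cong\mathcal B_{\alpha+1}$): each box element $(n,s,m)$ loses its middle coordinate at the collision $b>c$ on the left, giving $(0,(0,0),m+1)$, and is then sent to $(0,(0,0),0)$ by the collision $b=c$ on the right; the corner $(n+1,(0,0),m+1)$ is treated identically.

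First I would prove that $F$ is \emph{closed}. Since the space is Hausdorff the singleton $\{(0,(0,0),0)\}$ is closed, so $g^{-1}(\{(0,(0,0),0)\})$ is closed; the same Bruck computation identifies this preimage as $F$ together with the ``lower--diagonal'' boxes $\bigcup_{j\ge 1}[n-j,m-j]$, all of which lie in $\{(c,t,d):c<n\}$. To discard them I intersect with the equalizer $\mathrm{Eq}:=\{x:(n,(0,0),n)\cdot x=x\}$ of the identity map and of the continuous left shift by the idempotent $(n,(0,0),n)$. Being the equalizer of two continuous maps into a Hausdorff space, $\mathrm{Eq}$ is closed, and one checks that $\mathrm{Eq}=\{(c,t,d):c\ge n\}\cup\{0^{*}\}$. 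Hence $F=g^{-1}(\{(0,(0,0),0)\})\cap \mathrm{Eq}$ is an intersection of two closed sets, so $F$ is closed.

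For \emph{openness} I would first observe that the target point $(0,(0,0),0)$ is isolated: by the classification of isolated points of $\mathcal B_{\alpha+1}$ invoked in the proof of Lemma~\ref{lemma1}, a point whose two coordinates have different (or vanishing) last exponents in their Cantor normal form is isolated, and the identity satisfies this; moreover, by Hausdorffness a point isolated in the subspace $B(\mathcal B_\alpha)$ stays isolated in $B^{0}(\mathcal B_\alpha)$ once it is separated from $0^{*}$. Consequently $\{(0,(0,0),0)\}$ is open, and therefore $g^{-1}(\{(0,(0,0),0)\})=F\cup\bigcup_{j\ge 1}[n-j,m-j]$ is open, hence clopen. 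When $\min(n,m)=0$ the lower--diagonal union is empty and this already shows that $F$ is clopen. In the general case I would reduce to small boxes via Proposition~\ref{proposition1}, whose homeomorphism is realised by the translation shift $x\mapsto (n,(0,0),0)\cdot x\cdot(0,(0,0),m)$ carrying the base box onto $F$, and then strip off the lower--diagonal boxes.

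The step I expect to be the \textbf{main obstacle} is exactly this removal of the lower--diagonal boxes when $\min(n,m)\ge 1$. There $F$ shares its lower corner $(n,(0,0),m)$ with the set $[n-1,m-1]\cup\{(n,(0,0),m)\}$, so making $F$ open at this corner is equivalent to showing that $(n,(0,0),m)$ cannot accumulate the adjacent box $[n-1,m-1]$, i.e.\ that it is isolated; the basic neighborhood furnished by Lemma~\ref{lemma1} a priori reaches into $[n-1,m-1]$, so a finer shift-continuity argument (exploiting the adjoined zero $0^{*}$ and the absorption $(n+1,(0,0),m+1)\cdot(n,s,m)=(n+1,(0,0),m+1)$) is needed to pin this corner down. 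Once each such corner is known to be isolated, the boxes $\bigcup_{j\ge1}[n-j,m-j]$ peel off as a clopen set and $F=g^{-1}(\{(0,(0,0),0)\})\setminus\bigcup_{j\ge1}[n-j,m-j]$ is clopen, completing the proof.
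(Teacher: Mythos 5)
Your closedness argument is correct, complete, and genuinely different from the paper's. The computations behind it check out: $g$ sends all of $F$ to the identity $(0,(0,0),0)$, the full preimage $g^{-1}\bigl(\{(0,(0,0),0)\}\bigr)$ is exactly $F\cup\bigcup_{j\ge1}[n-j,m-j]$, and the equalizer of the identity map with left translation by the idempotent $(n,(0,0),n)$ is exactly $\{(c,t,d):c\ge n\}\cup\{0^{*}\}$; since singletons are closed and equalizers of continuous maps into a Hausdorff space are closed, $F$ is closed. The paper instead obtains clopenness inside $B(\mathcal{B}_{\alpha})$ by citing Lemma~\ref{lemma1} together with \cite[Proposition~6]{Gutik-Pavlyk-2009}, and spends the entire written proof ruling out $0^{*}$ as an accumulation point of $[n,m]$; your preimage--equalizer device absorbs the point $0^{*}$ for free. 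The openness argument is also fine when $\min(n,m)=0$, modulo the slightly glossed fact that the identity is isolated (it is a degenerate case of the ``$n_{m}\neq k_{t}$ or $n_{m}=k_{t}=0$'' classification, but it does follow: left translation by the isolated idempotent $(0,(1,1),0)$ carries the identity to $(0,(1,1),0)$ and has only these two points in the preimage of $(0,(1,1),0)$, so shift-continuity plus $T_{1}$ isolates the identity).

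The step you flag as the main obstacle, however, is not merely hard --- it fails. For $n,m\ge1$ the corner $(n,(0,0),m)$ is the point $(n\omega^{\alpha},m\omega^{\alpha})$ of $\mathcal{B}_{\alpha+1}\cong B(\mathcal{B}_{\alpha})$, and Lemma~\ref{lemma1} already makes $[n-1,m-1]\cup\{(n,(0,0),m)\}$ a neighborhood of this point in \emph{every} shift-continuous topology; since that neighborhood meets $F$ only in the corner itself, $F$ can be a neighborhood of the corner only if the corner is isolated. But by Proposition~\ref{proposition1} the neighborhood filter at $(n,(0,0),m)$ is a homeomorphic copy of the one at $(\omega^{\alpha},\omega^{\alpha})$, so the corner is isolated if and only if $(\omega^{\alpha},\omega^{\alpha})$ is --- and the latter is non-isolated in the non-discrete shift-continuous topologies that the paper itself exhibits (extend such a topology on $\mathcal{B}_{\alpha+1}$ to $B^{0}(\mathcal{B}_{\alpha})$ by isolating $0^{*}$). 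In any such topology every neighborhood of $(n,(0,0),m)$ meets $[n-1,m-1]$, which is disjoint from $F$, so the peeling-off step cannot be carried out and $F$ is in fact not open there. This means the gap in your proposal is unfixable along the announced lines, and it also exposes a genuine tension between the openness half of the statement (for $n,m\ge1$) and Lemma~\ref{lemma1} combined with Proposition~\ref{proposition1}; note that the later results of the paper (Lemma~\ref{lemma3}, Proposition~\ref{proposition3}) invoke only the closedness half, which your first argument does establish.
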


\begin{proof} It follows that $B^0(\mathcal B_\alpha)$ endowed with the shift-continuous topology $\tau$ is a semitopological monoid.
By Lemma~\ref{lemma1} and \cite[Proposition 6]{Gutik-Pavlyk-2009},
the set $[n,m]\cup\{(n+1,(0,0),m+1)\}$ is open-and-closed in the semitopological submonoid $B(\mathcal{B}_{\alpha})$ of $B^{0}(\mathcal B_\alpha)$.

Suppose that $0^{*}$ is an accumulation point of the set $[n,m]\cup\{(n+1,(0,0),m+1)\}$ and hence of the set $[n,m]$. Since $(0,(0,0),n)\cdot 0^{*}\cdot(m,(0,0),0)=0^{*}$, the shift-continuity of the topology $\tau$ implies that for every open neighborhood $U$ of $0^{*}$ there exists an open neighborhood $V$ of $0^{*}$ such that $(0,(0,0),n)\cdot V\cdot(m,(0,0),0)\subset U$. Since $0^*$ is an accumulation point of the set $[n,m]$, there exists an element $(n,(a,b),m)\in[n,m]\cap V$. Then $$(0,(0,0),n)\cdot (n,(a,b),m)\cdot(m,(0,0),0)=(0,(a,b),0)\in[0,0]\cap U,$$ which means that $0^{*}$ is an accumulation point of the set $[0,0]$.

Choose a neighborhood $U\in\tau$ of $0^*$ such that $(1,(0,0),1)\notin U$. Since $(1,(0,0),1)\cdot 0^{*}=0^{*}$, the shift-continuity of the topology $\tau$ yields a neighborhood $V\in\tau$ of $0^{*}$ such that $(1,(0,0),1)\cdot V\subset U$. Since $0^*$ is an accumulation point of the set $[0,0]$, we can choose an element $(0,(a,b),0)\in V$ and conclude that $(1,(0,0),1)=(1,(0,0),1)\cdot(0,(a,b),0)\in (1,(0,0),1)\cdot V\subset U$, which contradicts the choice of $U$.
\end{proof}

Now our aim will be to prove that for every positive integer $k$ each open neighborhood of $0^{*}$ in any locally compact shift-continuous topology with non-isolated $0^*$ on $B^{0}(\mathcal{B}_{k})$ contains all but finitely many boxes $[n,m]$.
This proof is divided into a series of lemmas.

\begin{lemma}\label{lemma2*} Assume that $0^*$ is non-isolated in a locally compact shift-continuous topology $\tau$ on  $B^{0}(\mathcal{B}_{k})$. Then for any open neighborhoods $U,V\in\tau$ of $0^{*}$ with compact closures $\overline{U}$, $\overline{V}$  the following condition holds:
\begin{equation*}
 U\cap[n,m]=V\cap[n,m]\hbox{ for all but finitely many boxes } [n,m],\hbox{ where } n,m\in\w.
\end{equation*}
\end{lemma}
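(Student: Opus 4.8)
The first thing I would establish is that every box $[n,m]$, $n,m\in\w$, is itself open-and-closed in $(B^{0}(\mathcal B_k),\tau)$; this is the key step and essentially the only place where any work is needed. The idea is that a single box is the difference of two of the clopen sets produced by Lemma~\ref{lemma2}. Put $C_{1}=[n,m]\cup\{(n+1,(0,0),m+1)\}$ and $C_{2}=[n+1,m+1]\cup\{(n+2,(0,0),m+2)\}$; both are open-and-closed by Lemma~\ref{lemma2}. I would then check that $C_{1}\setminus C_{2}=[n,m]$: the only point of $C_{1}$ lying outside the box $[n,m]$ is $(n+1,(0,0),m+1)$, and it belongs to $[n+1,m+1]\subseteq C_{2}$, whereas every genuine element $(n,s,m)$ of $[n,m]$ lies in neither $[n+1,m+1]$ nor $\{(n+2,(0,0),m+2)\}$, hence outside $C_{2}$. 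Therefore $[n,m]=C_{1}\cap(B^{0}(\mathcal B_k)\setminus C_{2})$ is open-and-closed. Note that this uses only shift-continuity, through Lemma~\ref{lemma2}, and no compactness.

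With this in hand, the second step is the observation that a compact set $K\subseteq B(\mathcal B_k)$ with $0^{*}\notin K$ can meet only finitely many boxes. Indeed, since $0^{*}\notin K$, the family $\{[n,m]:n,m\in\w\}$ of open boxes is an open cover of $K$, so by compactness finitely many boxes already cover $K$, and in particular $K$ meets only finitely many of them.

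For the final step I would apply this to the two sets $\overline{U\setminus V}$ and $\overline{V\setminus U}$. Because $U\setminus V\subseteq B^{0}(\mathcal B_k)\setminus V$ and the latter is closed and does not contain $0^{*}$, the closure $\overline{U\setminus V}$ is a closed subset of $\overline U$ avoiding $0^{*}$; as $\overline U$ is compact, $\overline{U\setminus V}$ is compact, and symmetrically $\overline{V\setminus U}\subseteq\overline V$ is compact and avoids $0^{*}$. By the second step each of them meets only finitely many boxes. Finally, if $U\cap[n,m]\neq V\cap[n,m]$ for some box, then $[n,m]$ meets $U\setminus V$ or $V\setminus U$, hence is among these finitely many boxes; so equality $U\cap[n,m]=V\cap[n,m]$ holds for all but finitely many boxes, as claimed. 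I expect the main (and really the only) obstacle to be the clopenness of the boxes in the first step; once that is in place, everything reduces to compactness together with the fact that a neighbourhood of $0^{*}$ has closed complement disjoint from $0^{*}$. The non-isolatedness of $0^{*}$ does not seem to be needed beyond framing, since the compactness of $\overline U,\overline V$ is assumed outright in the statement.
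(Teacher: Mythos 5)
Your proof is correct, and it takes a somewhat different route from the paper's. The paper argues by contradiction: assuming infinitely many boxes witness $U\cap[n,m]\neq V\cap[n,m]$, it splits into the two cases $U\setminus V$ and $V\setminus U$, picks one point of $U\setminus V$ (resp.\ $V\setminus U$) in each of infinitely many pairwise distinct boxes, and invokes Lemma~\ref{lemma1} to see that such a sequence inside $\overline{U}$ has no accumulation point, contradicting compactness of $\overline{U}$. You instead first upgrade Lemma~\ref{lemma2} to the clopenness of each single box $[n,m]$ (via the correct observation that $[n,m]=C_1\setminus C_2$ for your two clopen sets $C_1,C_2$), and then run a direct open-cover argument: $\overline{U\setminus V}$ and $\overline{V\setminus U}$ are compact and avoid $0^*$, hence are covered by, and so meet, only finitely many of the pairwise disjoint open boxes. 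The two arguments rest on the same underlying fact --- a compact subset of $B(\mathcal{B}_k)$ staying away from $0^*$ can meet only finitely many boxes --- but you reach it through Lemma~\ref{lemma2} and disjoint open covers rather than through Lemma~\ref{lemma1} and accumulation points of a sequence. Your version is arguably cleaner: it is a direct proof, it avoids the case split and the choice of a sequence, and the clopenness of individual boxes is a reusable fact. Your closing remark is also accurate: neither argument actually uses that $0^*$ is non-isolated beyond the standing hypotheses, since compactness of $\overline{U}$ and $\overline{V}$ is assumed in the statement.
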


\begin{proof} To derive a contradiction, assume that the set $A=\{[n,m]:U\cap[n,m]\neq V\cap[n,m]\}$ is infinite.
Then at least one of the sets $B=\{[n,m]:U\cap[n,m]\neq (V\cap U)\cap[n,m]\}$ or $C=\{[n,m]:(U\cap V)\cap[n,m]\neq V\cap[n,m]\}$ is infinite.
Suppose that the set $B$ is infinite, i.e., $B=\{[n_i,m_i]:i\in\mathbb{N}\}$ where the pairs $(n_i,m_i)$, $i\in\mathbb N$, are pairwise distinct. For each $i\in\mathbb{N}$ choose any point $(a,b)_{i}\in U\cap[n_i,m_i]\setminus V\cap U$. By Lemma~\ref{lemma1}, the sequence $((a,b)_{i})_{i\in\mathbb{N}}\subset\overline{U}$ has no accumulation points in $B^{0}(\mathcal{B}_{k})$ which contradicts the compactness of $\overline{U}$. If the set $C$ is infinite then the proof is similar.
\end{proof}

\begin{lemma}\label{lemma3}
 Assume that $0^*$ is non-isolated in a locally compact shift-continuous topology $\tau$ on  $B^{0}(\mathcal{B}_{k})$. Then for each neighborhood $U\in\tau$ of $0^{*}$ there exists a number $n\in\w$ such that the set $U\cap[n,m]$ is non-empty for infinitely many numbers $m\in\w$.
\end{lemma}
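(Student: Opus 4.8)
The plan is to locate the desired row not by manipulating the (possibly unbounded) first coordinates of points approaching $0^*$, but by propagating the property ``$U$ meets $[n,m]$'' rightwards along a single fixed row, using one continuous self-map together with Lemma~\ref{lemma2*}. Note first that the conclusion is monotone in $U$: if $U_1\subseteq U$ and $U_1\cap[n,m]\neq\emptyset$ for infinitely many $m$, then the same holds for $U$. So, using local compactness, I would replace $U$ by an open neighbourhood $U_1\subseteq U$ of $0^*$ with compact closure $\overline{U_1}$, and it suffices to find $n\in\w$ with $U_1\cap[n,m]\neq\emptyset$ for infinitely many $m$. The key map is the right shift $R\colon x\mapsto x\cdot(0,(0,0),1)$, which is continuous by shift-continuity (separate continuity), fixes $0^*$ since $0^*$ is absorbing, and, by a direct computation from the Bruck multiplication, satisfies $R([n,m])\subseteq[n,m+1]$ for all $n,m\in\w$.

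Next I would manufacture a comparison neighbourhood from $R$. Since $R$ is continuous with $R(0^*)=0^*$, the set $R^{-1}(U_1)$ is an open neighbourhood of $0^*$, so local compactness lets me choose an open neighbourhood $W$ of $0^*$ with $\overline W$ compact and $W\subseteq U_1\cap R^{-1}(U_1)$; in particular $R(W)\subseteq U_1$. Applying Lemma~\ref{lemma2*} to the two compact-closure neighbourhoods $U_1$ and $W$ yields a \emph{finite} set $F$ of boxes with $U_1\cap[n,m]=W\cap[n,m]$ whenever $[n,m]\notin F$. This produces the crucial implication: if $[n,m]\notin F$ and $U_1\cap[n,m]\neq\emptyset$, then choosing $x\in W\cap[n,m]=U_1\cap[n,m]$ gives $R(x)\in R(W)\cap[n,m+1]\subseteq U_1\cap[n,m+1]$, so $U_1$ also meets $[n,m+1]$.

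It then remains to start this propagation far enough to the right to clear the finitely many exceptional boxes in $F$. First, non-isolation of $0^*$ forces $U_1$ to meet infinitely many boxes: otherwise I could cover the finitely many boxes it meets by the clopen sets of Lemma~\ref{lemma2} and delete them from $U_1$, isolating $0^*$. Now, for each $n$ let $g(n)$ be the largest $m$ with $[n,m]\in F$, with the convention $g(n)=-1$ when $F$ has no box in row $n$. If every box $[n,m]$ met by $U_1$ satisfied $m\le g(n)$, then all such boxes would lie in a finite set (the boxes of $F$ together with, in each of the finitely many rows that $F$ meets, the boxes with $m\le g(n)$), contradicting that $U_1$ meets infinitely many boxes. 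Hence there is a box $[n_0,m_0]$ met by $U_1$ with $m_0>g(n_0)$, whence $[n_0,m]\notin F$ for every $m\ge m_0$; the implication of the previous paragraph then applies at each step, and induction on $m$ gives $U_1\cap[n_0,m]\neq\emptyset$ for all $m\ge m_0$, so $n=n_0$ works. The main obstacle I anticipate is precisely the choice of self-map: since the statement fixes the first coordinate while letting the second range over infinitely many values, and no shift can collapse the unbounded first coordinates of an arbitrary sequence tending to $0^*$, one must instead transport ``being met'' \emph{within} a row; the right shift $R$ does this, and Lemma~\ref{lemma2*} is exactly what lets one carry membership across $R$, with the finite defect $F$ absorbed by the pigeonhole step above.
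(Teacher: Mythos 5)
Your proof is correct and rests on essentially the same ingredients as the paper's: the right shift $x\mapsto x\cdot(0,(0,0),1)$ fixing $0^*$, Lemma~\ref{lemma2*} applied to $U$ and a smaller compact-closure neighbourhood whose shift lands back in $U$, and the observation (via Lemma~\ref{lemma2}) that every neighbourhood of $0^*$ must meet infinitely many boxes. The paper packages this as a contradiction --- assuming every row meets only finitely many boxes and showing the maximal met box of each nonempty row is expelled from the smaller neighbourhood, violating Lemma~\ref{lemma2*} --- while you propagate membership directly rightward along a single row past the finite exceptional set; the underlying argument is the same.
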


\begin{proof} To derive a contradiction, assume that for some  neighborhood $U\in\tau$ of $0^{*}$ and some $n\in\w$ the set $A_{n}=\{m\in\w:U\cap[n,m]\neq\emptyset\}$ is finite.
Since the topology $\tau$ is locally compact, we can additionally assume that the neighborhood $U$ has compact closure $\bar U$.

We claim that each  neighborhood $V\in\tau$ of $0^{*}$ intersects infinitely many boxes $[n,m]$. Indeed, suppose that some neighborhood $V\in\tau$ of $0^{*}$  intersects only finitely many boxes, i.e., $V\subset\bigcup_{i=1}^{p}[n_{i},m_{i}]\cup\{0^{*}\}$ for some $p\in\mathbb N$ and some numbers $n_i,m_i\in\w$.
By Lemma~\ref{lemma2}, for every non-negative integers $n,m$ the set $[n,m]\cup\{(n+1,(0,0),m+1\}$ is closed in $(B^{0}(\mathcal{B}_{k}),\tau)$ and hence the set $F:=\bigcup_{i=1}^{p}([n_{i},m_{i}]\cup\{(n_{i}+1,(0,0),m_{i}+1\})$ is closed as well. Then the neighborhood $V\setminus F$ of $0^*$ is a singleton, which is not possible as $0^*$ is non-isolated.

Since for each non-negative integer $n$ the set $A_{n}$ is finite we conclude that the set $B=\{n\in\w:A_{n}\neq\emptyset\}$ is infinite.
For every $n\in B$ let $m_{n}:=\max A_n$. Since $0^{*}\cdot(0,(0,0),1)=0^{*}$, the shift-continuity of the topology $\tau$ yields a neighborhood $V\subset U$ of $0^{*}$ such that
$V\cdot(0,(0,0),1)\subset U$. Then $V\subset U\setminus\{[n,m_{n}]:n\in B\}$ which contradicts Lemma~\ref{lemma2*}.
\end{proof}

\begin{lemma}\label{lemma7}
 Assume that $0^*$ is non-isolated in a locally compact shift-continuous topology $\tau$ on  $B^{0}(\mathcal{B}_{k})$.
Then each neighborhood $U\in\tau$ of $0^{*}$ contains all but finitely many elements $(n,(0,0),m)$ with $n,m\in\w$.
\end{lemma}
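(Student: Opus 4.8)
The plan is to reduce to the case of a neighbourhood with compact closure and then to \emph{flood-fill} the grid of elements $(n,(0,0),m)$, propagating membership in $U$ from one such element to its four grid-neighbours. First I would note that it suffices to prove the statement for neighbourhoods $U$ of $0^{*}$ with compact closure: by local compactness every neighbourhood of $0^{*}$ contains such a $U$, and the property of containing all but finitely many of the points $(n,(0,0),m)$ is inherited by larger sets. So assume henceforth that $\overline{U}$ is compact. The tools I would use are the four shifts $r_{+}(x)=x\cdot(0,(0,0),1)$, $r_{-}(x)=x\cdot(1,(0,0),0)$, $\ell_{+}(x)=(1,(0,0),0)\cdot x$ and $\ell_{-}(x)=(0,(0,0),1)\cdot x$, which on the points $(n,(0,0),m)$ act (away from the axes) as the four unit grid-translations $(n,(0,0),m)\mapsto(n,(0,0),m\pm1)$ and $(n,(0,0),m)\mapsto(n\pm1,(0,0),m)$; each of them fixes $0^{*}$ because $0^{*}$ is the zero of $B^{0}(\mathcal B_{k})$.

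The engine is the following \emph{propagation} principle. For each of the four shifts $\sigma$ above, separate continuity together with $\sigma(0^{*})=0^{*}$ and local compactness yields a neighbourhood $W_{\sigma}$ of $0^{*}$ with compact closure such that $\sigma(W_{\sigma})\subseteq U$. By Lemma~\ref{lemma2*} the sets $W_{\sigma}$ and $U$ have the same trace on every box $[n,m]$ outside some finite set $F_{\sigma}$. Consequently, setting $F=\bigcup_{\sigma}F_{\sigma}$, whenever the box $[n,m]$ lies outside $F$ and $(n,(0,0),m)\in U$ we get $(n,(0,0),m)\in W_{\sigma}$, hence $\sigma\bigl((n,(0,0),m)\bigr)\in U$. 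Thus, off the finite family $F$ of boxes, membership of a point $(n,(0,0),m)$ in $U$ propagates to each of its four grid-neighbours.

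Next I would seed at least one such point into $U$. By Lemma~\ref{lemma3} there is $n_{0}\in\w$ with $U\cap[n_{0},m]\neq\emptyset$ for infinitely many $m$; choose $(n_{0},s_{m},m)\in U$ for such $m$. The collapsing shift $c(x)=(n_{0},(0,0),n_{0}+1)\cdot x$ fixes $0^{*}$ and, by the multiplication in $B(\mathcal B_{k})$, sends every element of the box $[n_{0},m]$ to the single point $(n_{0},(0,0),m+1)$. Picking a compact-closure neighbourhood $W$ with $c(W)\subseteq U$ and invoking Lemma~\ref{lemma2*} once more, for all but finitely many of the chosen $m$ the box $[n_{0},m]$ lies outside the exceptional set of the pair $(U,W)$, so $(n_{0},s_{m},m)\in W$ and therefore $(n_{0},(0,0),m+1)=c\bigl((n_{0},s_{m},m)\bigr)\in U$. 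In particular $U$ contains a seed point $(n_{0},(0,0),m^{*})$ whose box lies outside $F$.

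Finally I would flood-fill. Starting from the seed and iterating the four propagation implications along paths in $\omega\times\omega$ that avoid the finite set $F$, every point $(n,(0,0),m)$ with $n,m\geq1$ whose box is joined to the seed by such a path lies in $U$; since $\omega\times\omega$ minus a finite set is connected under horizontal and vertical adjacency, this covers all but finitely many interior points. The points on the two axes are then captured from the interior by one further application of $\ell_{-}$ (carrying $(1,(0,0),m)$ to $(0,(0,0),m)$) and of $r_{-}$ (carrying $(n,(0,0),1)$ to $(n,(0,0),0)$), so they too belong to $U$ with at most finitely many exceptions. Hence $(n,(0,0),m)\in U$ for all but finitely many pairs $(n,m)$. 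The step I expect to be the main obstacle is the bookkeeping in the propagation principle: one must ensure that every auxiliary neighbourhood $W_{\sigma}$ and $W$ is chosen with compact closure so that Lemma~\ref{lemma2*} is applicable, and that the union of all the finitely many exceptional boxes arising from the four shifts and from the seeding stays finite, so that only finitely many of the points $(n,(0,0),m)$ can escape $U$.
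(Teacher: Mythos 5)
Your proof is correct, but it takes a genuinely different route from the paper's. The seeding step is essentially the same in both arguments: Lemma~\ref{lemma3} produces a row $n_{0}$ meeting $U$ in infinitely many boxes, and a left translation by $(n_{0},(0,0),n_{0}+1)$ (the paper uses $(0,(0,0),n+1)$) collapses each box $[n_{0},m]$ onto the single point $(n_{0},(0,0),m+1)$, forcing infinitely many points of the grid $\{(n,(0,0),m):n,m\in\w\}$ into every neighborhood of $0^{*}$. At that point the paper stops doing hands-on work: it observes that $B=\{(n,(0,0),m):n,m\in\w\}\cup\{0^{*}\}$ is closed, hence a locally compact subsemigroup isomorphic to the bicyclic monoid with adjoined zero in which $0^{*}$ is now known to be non-isolated, and invokes the dichotomy of \cite[Theorem 1]{Gutik-2015} to conclude that $B$ is compact with $0^{*}$ its unique non-isolated point. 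Your flood-fill via the four unit shifts and Lemma~\ref{lemma2*} replaces that citation by a direct propagation argument; in effect you re-prove the relevant half of Gutik's theorem inline, which makes the proof longer but self-contained. One small inaccuracy to repair: $\w\times\w$ minus a finite set is not always connected under horizontal and vertical adjacency (removing $(0,1)$, $(1,0)$, $(1,1)$ isolates $(0,0)$); what is true, and all you need, is that the unbounded component omits only finitely many points --- alternatively, since your seeding yields infinitely many seeds, you may simply start from one whose coordinates exceed every coordinate occurring in the exceptional set $F$, after which the row through the seed and then each column can be filled with only finitely many total omissions.
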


\begin{proof}
By Lemmas~\ref{lemma2*} and \ref{lemma3} there exists a number $n\in\w$ such that for every open neighborhood $U$ of $0^{*}$ the set
$A_{U}=\{m\in\w:[n,m]\cap U\neq\emptyset\}$ is infinite.
Since $(0,(0,0),n+1)\cdot 0^{*}=0^{*}$, the shift-continuity of the topology $\tau$ implies that for every neighborhood $U\in\tau$ of $0^{*}$ there exists a neighborhood $V\in\tau$ of $0^{*}$ such that $(0,(0,0),n+1)\cdot V\subset U$. Since for every number
$m\in A_{V}$ and element $(n,(a,b),m)\in[n,m]\cap V$, $$(0,(0,0),n+1)\cdot(n,(a,b),m)=(0,(0,0),m+1),$$ the point $0^{*}$ is an accumulation point of the set $\{(0,(0,0),m+1):m\in A_{V}\}\subset \{(n,(0,0),m):n,m\in\w\}$.
Simple verifications show that the set $B=\{(n,(0,0),m):n,m\in\w\}\cup\{0^{*}\}$ is closed and hence is a locally compact subsemigroup of $B^{0}(\mathcal{B}_{k})$. It follows that $B$ is isomorphic to the bicyclic semigroup with adjoined zero $0^{*}$, which is not isolated in $B$. By \cite[Theorem 1]{Gutik-2015}, $B$ is a compact semitopological semigroup with a unique non-isolated point $0^*$. Consequently, each neighborhood $U\in\tau$ of $0^*$ contains all but finitely many elements of $B$.
\end{proof}

\begin{proposition}\label{proposition3} Let $\tau$ be a locally compact shift-continuous topology on the monoid $B^{0}(\mathcal{B}_{k})$. If  $(1,(0,0),1)$ is isolated in the topology $\tau$, then $0^*$ is isolated, too.
\end{proposition}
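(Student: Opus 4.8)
The plan is to argue by contradiction: I assume that $(1,(0,0),1)$ is isolated but that $0^*$ is \emph{not} isolated, so that Lemmas~\ref{lemma2*},~\ref{lemma3} and~\ref{lemma7} are all at my disposal. The first move is to promote the isolation of the single corner $(1,(0,0),1)$ to the isolation of \emph{every} corner $(n,(0,0),m)$. For $n,m\ge 1$ I would use the two-sided shift $\Lambda_{n,m}(x)=(n-1,(0,0),0)\cdot x\cdot(0,(0,0),m-1)$, whose companion shift $(0,(0,0),n-1)\cdot x\cdot(m-1,(0,0),0)$ is its set-theoretic inverse on the relevant pieces; a direct computation shows that $\Lambda_{n,m}$ carries the set $[0,0]\cup\{(1,(0,0),1)\}$ bijectively onto $[n-1,m-1]\cup\{(n,(0,0),m)\}$, sending corner to corner. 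By Lemma~\ref{lemma2} both sets are open-and-closed, and shift-continuity upgrades $\Lambda_{n,m}$ to a homeomorphism between them, so $(n,(0,0),m)$ is isolated. Since every attached corner $(p+1,(0,0),q+1)$ now has both indices positive and is therefore isolated, Lemma~\ref{lemma2} shows in turn that each box $[p,q]$ is open-and-closed, and the remaining boundary corners (those with a zero index) are then isolated inside their clopen boxes.

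The second step is to see that each box is an \emph{infinite discrete} clopen set. The decisive observation is the absorbing identity $(1,(0,0),1)\cdot(0,s,0)=(0,s,0)\cdot(1,(0,0),1)=(1,(0,0),1)$, which identifies the clopen set $[0,0]\cup\{(1,(0,0),1)\}$ with a copy of $\mathcal B_k$ carrying an adjoined \emph{isolated} zero, that is, by Theorem~\ref{theorem1}, with $B^0(\mathcal B_{k-1})$. For $k=1$ this is the bicyclic monoid with an adjoined isolated zero, and the compact-or-discrete dichotomy of~\cite[Theorem~1]{Gutik-2015} excludes the compact case and forces $[0,0]$ to be discrete; for larger $k$ I would run an induction on $k$, exploiting exactly this self-similar identification via Theorem~\ref{theorem1}. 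Transporting discreteness along the homeomorphisms $\Lambda_{n,m}$ then makes every box discrete, and each box, being a copy of the infinite semigroup $\mathcal B_k$, is infinite.

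The final step is to contradict the local compactness of $\tau$ at $0^*$. I fix a neighbourhood $U$ of $0^*$ with compact closure $\overline U$. Using the right translations $R_t\colon x\mapsto x\cdot(0,t,0)$, which fix $0^*$ and satisfy $(n,(0,0),0)\cdot(0,t,0)=(n,t,0)$, shift-continuity at $0^*$ combined with Lemma~\ref{lemma7} gives, for each fixed $t\in\mathcal B_k$, that $(n,t,0)\in U$ for all but finitely many $n$. \textbf{The hard part will be to convert this column-by-column containment into the containment of one entire box $[n_0,0]$ in a compact-closure neighbourhood of $0^*$:} a single shift only sweeps one value of $t$ at a time, so the infinitely many conditions must be amalgamated uniformly, and this is precisely where I expect to need local compactness together with the agreement Lemma~\ref{lemma2*}. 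Once a whole box $[n_0,0]$ is trapped inside $\overline U$, the contradiction is immediate, since $[n_0,0]$ is open-and-closed, hence a closed and therefore compact subset of $\overline U$, while being simultaneously infinite and discrete. This contradiction forces $0^*$ to be isolated, as required.
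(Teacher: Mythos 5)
Your first step (promoting the isolation of $(1,(0,0),1)$ to that of every corner $(n,(0,0),m)$) is fine; the paper obtains the same conclusion directly from Proposition~\ref{proposition1}. The argument breaks irreparably at your second step. The claim that the isolation of $(1,(0,0),1)$ forces every box to be discrete is false for $k\ge 2$, and the induction you sketch cannot close: at the inductive step you would need ``if the adjoined zero of $B^{0}(\mathcal{B}_{k-1})$ is isolated in a locally compact shift-continuous topology, then that topology is discrete,'' which is essentially the (false) converse of the very proposition being proved. Concretely, identify $B^{0}(\mathcal{B}_{2})$ with $\mathcal{B}_{3}\cup\{0^{*}\}$ via Theorem~\ref{theorem1}, equip $\mathcal{B}_{3}$ with the topology $\tau_{2,3}$ and declare $0^{*}$ isolated. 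This is a locally compact shift-continuous topology in which both $0^{*}$ and $(1,(0,0),1)=(\omega^{2},\omega^{2})$ are isolated, yet the point $(0,(\omega,\omega),0)=(\omega,\omega)$ of the box $[0,0]$ is not isolated. So boxes need not be discrete, and your final contradiction --- an infinite discrete clopen box sitting inside the compact set $\overline{U}$ --- has no foundation. (Your third step is also left as an admitted gap, though it could be completed along the lines of Lemma~\ref{lemma9}, whose proof does not depend on the present proposition; the fatal defect is the second step.)

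The paper's proof sidesteps exactly this difficulty: instead of making a whole box discrete, it uses only a discrete closed ray inside each box, namely $\{(0,(0,t\omega^{k-1}),m):t\in\w\}$, whose sole possible accumulation point is the corner $(1,(0,0),1)$ by Lemma~\ref{lemma1} --- and that corner is isolated by hypothesis. Compactness of $\overline{U}$ then forces $U$ to meet each such ray in a nonempty finite set with a maximal element $(0,(0,t_{m}\omega^{k-1}),m)$, and the left shift by $(0,(0,\omega^{k-1}),0)$ pushes every such maximal element out of $U$, yielding a neighborhood $W$ of $0^{*}$ differing from $U$ in infinitely many boxes, contrary to Lemma~\ref{lemma2*}. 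If you wish to salvage your outline, replace ``each box is discrete'' by ``each box contains an infinite closed discrete subset meeting $U$'' and aim the contradiction at Lemma~\ref{lemma2*} rather than at the compactness of an entire box.
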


\begin{proof}
To derive a contradiction, assume that $0^{*}$ is a non-isolated point in the topology $\tau$. By Lemma~\ref{lemma7} each open neighborhood of $0^{*}$ contains all but finitely many  elements $(n,(0,0),m)$, with $n,m\in\w$. Fix any neighborhood $U\in\tau$ of $0^{*}$ with compact closure $\overline{U}$. By Lemma~\ref{lemma2}, for every $n,m\in\w$ the set $[n,m]\cup\{(n+1,(0,0),m+1)\}$ is open-and-closed, which implies that $\overline{U}\cap([n,m]\cup\{(n+1,(0,0),m+1)\})$ is compact. Since $(1,(0,0),1)$ is an isolated point of $B^{0}(\mathcal{B}_{k})$, by Proposition~\ref{proposition1} we have that for any $n,m\in\w$ the point $(n,(0,0),m)$ is isolated as well.

Since $(0,(0,\omega^{k-1}),0)\cdot 0^{*}=0^{*}\in U$, the shift-continuity of the topology $\tau$ yields a neighborhood $V\in\tau$ of $0^{*}$ such that $(0,(0,\omega^{k-1}),0)\cdot V\subset U$. By Lemma~\ref{lemma7}, $V$ contains all but finitely many elements  $(0,(0,0),m)$ with $m\in\w$. Then $$(0,(0,\omega^{k-1}),0)\cdot (0,(0,0),m)=(0,(0,\omega^{k-1}),m)\in U.$$ Hence there exists a non-negative integer $m_{0}$ such that for every non-negative integer $m>m_{0}$ the set $B_{m}=\{(0,(0,t\omega^{k-1}),m):t\in\mathbb{N}\}\cap U$ is non-empty. By Lemma~\ref{lemma1}, the set $\{(0,(k_{1}\omega^{k-1},k_{2}\omega^{k-1}),0):k_{1},k_{2}\in\w\}$ is a closed discrete subspace of $B^{0}(\mathcal{B}_{k})$, because the only accumulation point of this set could be $(1,(0,0),1)$, but this point is isolated by our assumption. By \cite[Proposition 5]{Gutik-Pavlyk-2009}, for any $n,m\in\w$ the set $\{(n,(k_{1}\omega^{k-1},k_{2}\omega^{k-1}),m):k_{1},k_{2}\in\w\}$ is a closed discrete subspace of $B^{0}(\mathcal{B}_{k})$. The compactness of $\overline{U}$ implies that for each integer $m>m_{0}$ the set $B_{m}$ is finite.

For each integer $m>m_{0}$ let $t_{m}$ be the largest integer such that $(0,(0,t_{m}\omega^{k-1}),m)\in U$.
Since $(0,(0,\omega^{k-1}),0)\cdot0^{*}=0^{*}$, the shift-continuity of the topology $\tau$ yields a neighborhood $W\subset U$ of $0^{*}$ such that $(0,(0,\omega^{k-1}),0)\cdot W\subset U$. Since for each non-negative integer $m>m_{0}$ $$(0,(0,\omega^{k-1}),0)\cdot(0,(0,t_{m}\omega^{k-1}),m)=(0,(0,(t_{m}+1)\omega^{k-1}),m)$$ we conclude that $W\subset U\setminus\{(0,(0,t_{m}\omega^{k-1}),m):m>m_{0}\}$, but this contradicts Lemma~\ref{lemma2*}.
\end{proof}

\begin{corollary}\label{corollary1} Let $\alpha\le\w$ and $\tau$ be a locally compact shift-continuous topology on the monoid $\mathcal{B}_{\alpha}$. If for some $n\in\w$ the point $(\w^n,\w^n)$ is isolated in the topology $\tau$, then for each non-negative $i<\alpha-n$ the point $(\omega^{n+i},\omega^{n+i})$ is  isolated in $(\mathcal{B}_{\alpha},\tau)$.
\end{corollary}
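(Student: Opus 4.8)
The plan is to argue by induction on $i$, reducing the statement to the single step ``if $(\omega^{j},\omega^{j})$ is isolated then $(\omega^{j+1},\omega^{j+1})$ is isolated'' and then feeding this step into Proposition~\ref{proposition3}. The base case $i=0$ is exactly the hypothesis, so fix $j=n+i$ with $j+1<\alpha$ and assume $(\omega^{j},\omega^{j})$ is isolated. First I would introduce the subset
\[
T_{j}:=\{(x,y):x<\omega^{j+1}\text{ and }y<\omega^{j+1}\}\cup\{(\omega^{j+1},\omega^{j+1})\}
\]
and check that it is an \emph{open} subsemigroup of $(\mathcal{B}_{\alpha},\tau)$. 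A short computation with the multiplication shows that $(\omega^{j+1},\omega^{j+1})$ absorbs every $(x,y)$ with $x,y<\omega^{j+1}$, so $T_{j}$ is closed under the operation; and by Lemma~\ref{lemma1} every point of $T_{j}$ has a neighborhood $V_{(x,y)}\subseteq T_{j}$ (these neighborhoods never raise the leading exponent of a coordinate above $j$, while $V_{(\omega^{j+1},\omega^{j+1})}=T_{j}$), so $T_{j}$ is open. Being an open subspace of a locally compact Hausdorff space, $T_{j}$ is locally compact, and being a subsemigroup of a semitopological semigroup it is semitopological; thus $(T_{j},\tau|_{T_{j}})$ is a locally compact shift-continuous monoid.

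Next I would identify $T_{j}$ with $B^{0}(\mathcal{B}_{j})$. By Theorem~\ref{theorem1} the submonoid $\{(x,y):x,y<\omega^{j+1}\}=\mathcal{B}_{j+1}$ is isomorphic to $B(\mathcal{B}_{j})$, and adjoining the absorbing element $(\omega^{j+1},\omega^{j+1})$ realizes $T_{j}\cong B^{0}(\mathcal{B}_{j})$ with $0^{*}$ corresponding to $(\omega^{j+1},\omega^{j+1})$; under the isomorphism $f$ of Theorem~\ref{theorem1} the idempotent $(\omega^{j},\omega^{j})=(1\cdot\omega^{j},1\cdot\omega^{j})$ corresponds to $(1,(0,0),1)$. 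Transporting $\tau|_{T_{j}}$ along this isomorphism gives a locally compact shift-continuous topology on $B^{0}(\mathcal{B}_{j})$ in which $(1,(0,0),1)$ is isolated, because $(\omega^{j},\omega^{j})$ is isolated in $\mathcal{B}_{\alpha}$ and hence in the subspace $T_{j}$. As long as $j\ge 1$, Proposition~\ref{proposition3} then forces $0^{*}$ to be isolated, i.e.\ $(\omega^{j+1},\omega^{j+1})$ is isolated in $T_{j}$; and since $T_{j}$ is open, isolation in $T_{j}$ implies isolation in $\mathcal{B}_{\alpha}$. This settles the inductive step for every $j\ge 1$.

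The main obstacle is the boundary level $j=0$, which is needed only when $n=0$, in order to pass from the (automatically isolated) point $(\omega^{0},\omega^{0})=(1,1)$ to $(\omega,\omega)$. Here Proposition~\ref{proposition3} does not apply: it is established only for $k\ge 1$, and indeed $T_{0}\cong B^{0}(\mathcal{B}_{0})$ is merely the bicyclic monoid with adjoined zero, which does admit a compact non-discrete shift-continuous topology, so the transported argument above cannot by itself exclude $(\omega,\omega)$ from being a limit of $T_{0}$. Instead I would prove, unconditionally, that $(\omega,\omega)$ is isolated by viewing it as a \emph{non-zero} element of a bicyclic-with-zero subsemigroup lying one box higher. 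In the Bruck coordinates of $\mathcal{B}_{2}\cong B(\mathcal{B}_{1})$ we have $(\omega,\omega)=(1,(0,0),1)$, and the set $[1,1]\cup\{(2,(0,0),2)\}$, with $(2,(0,0),2)=(2\omega,2\omega)$, is a subsemigroup isomorphic to the bicyclic monoid with adjoined zero; it is open in $\mathcal{B}_{\alpha}$ by \cite[Proposition~6]{Gutik-Pavlyk-2009} together with the openness of $\mathcal{B}_{2}$ in $\mathcal{B}_{\alpha}$. By \cite[Theorem~1]{Gutik-2015} this subsemigroup is either compact or discrete, and in both cases its only possible non-isolated point is its zero $(2\omega,2\omega)$; hence the non-zero element $(\omega,\omega)$ is isolated in it, and therefore in $\mathcal{B}_{\alpha}$. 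This supplies the missing base point for $n=0$, after which the step of the second paragraph (now always applied at a level $j\ge 1$) propagates isolation up to every $(\omega^{i},\omega^{i})$ with $i<\alpha$, completing the induction.
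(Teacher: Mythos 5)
Your inductive step for the levels $j=n+i\ge 1$ is essentially the paper's own proof: the paper likewise passes to the subsemigroup $S=\{(a,b):a,b<\omega^{j+1}\}\cup\{(\omega^{j+1},\omega^{j+1})\}\cong B^{0}(\mathcal{B}_{j})$, observes that the inductive hypothesis makes $(1,(0,0),1)$ isolated there, and invokes Proposition~\ref{proposition3}; your explicit verification that $S$ is open (via Lemma~\ref{lemma1}) and hence locally compact and semitopological in the subspace topology is a welcome addition, since the paper leaves this implicit. You are also right to flag that this argument says nothing when $j=0$: Proposition~\ref{proposition3} genuinely fails for $B^{0}(\mathcal{B}_{0})$, which is the bicyclic monoid with adjoined zero and carries the compact topology in which every nonzero point is isolated while $0^{*}$ is not.

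However, your repair of the boundary case is incorrect, because the statement it purports to prove is false. In the topology $\tau_{2,2}$ on $\mathcal{B}_{2}$ the point $(\omega^{0},\omega^{0})=(1,1)$ is isolated (every point whose two last exponents are both $0$ is isolated in any shift-continuous Hausdorff topology, by the first line of the proof of Lemma~\ref{lemma1}), yet $(\omega,\omega)$ is not isolated: its basic neighborhoods $U_{n}$ lie in $[0,0]\cup\{(1,(0,0),1)\}$ and meet $[0,0]$. The same example shows exactly where your argument breaks: the set $[1,1]\cup\{(2,(0,0),2)\}$ is \emph{not} open in $\tau_{2,2}$, because its element $(1,(0,0),1)=(\omega,\omega)$ is a limit of points of the lower box $[0,0]$ --- by Lemma~\ref{lemma1} the canonical neighborhood of $(n,(0,0),m)$ is $[n-1,m-1]\cup\{(n,(0,0),m)\}$, which sticks out of $[n,m]$, so openness of $[n,m]\cup\{(n+1,(0,0),m+1)\}$ cannot be extracted from the cited sources in general. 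The set is closed, so Gutik's dichotomy does apply to its subspace topology and does make $(\omega,\omega)$ isolated \emph{in that subspace}; but isolation in a closed, non-open subspace does not transfer to $\mathcal{B}_{\alpha}$. The correct resolution is not to prove the $n=0$ case but to exclude it: the corollary must be read with $n\ge 1$ (which is how it is applied in Theorem~\ref{theorem3}, where $i$ is the smallest \emph{positive} ordinal with $(\omega^{i},\omega^{i})$ isolated), and for $n\ge 1$ your second paragraph, which coincides with the paper's argument, already suffices.
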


\begin{proof} The statement is trivial for $i=0$. Assume that for some $i\in\w$ we have proved that $(\w^{n+i},\w^{m+i})$ is an isolated point in $(\mathcal B_\alpha,\tau)$.

Observe that the set $S=\{(a,b):a,b<\omega^{n+i+1}\}\cup\{(\omega^{n+i+1},\omega^{n+i+1})\}$ endowed with a semigroup operation inherited from $\mathcal{B}_{\alpha}$ is isomorphic to the $B^{0}(\mathcal{B})_{n+i}$ where $(\omega^{n+i+1},\omega^{n+i+1})$ is a zero of $B^{0}(\mathcal{B})_{n+i}$.
By the inductive assumption, the point $(\omega^{n+i},\omega^{n+i})$ is isolated in $\mathcal{B}_{\alpha}$. Hence $(1,(0,0),1)$ is an isolated point in $B^{0}(\mathcal{B})_{n+i}$ and by Proposition~\ref{proposition3}, the point $0^*$ is isolated in $B^{0}(\mathcal{B})_{n+i}$. Then $(\omega^{n+i+1},\omega^{n+i+1})$ in isolated in $S$ and also in $\mathcal B_\alpha$ (by Lemma~\ref{lemma1}).
\end{proof}

\begin{lemma}\label{lemma8} Assume that $0^*$ is not isolated in some locally compact shift-continuous topology $\tau$ on $B^{0}(\mathcal{B}_{k})$. Then for every neighborhood $U\in\tau$ of $0^{*}$ there exists a number $n_{U}\in\w$ such that for every $n>n_{U}$
$$[0,n+1]\cap U=([0,n]\cap U)\cdot (0,(0,0),1).$$
\end{lemma}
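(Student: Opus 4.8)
The plan is to realize the two inclusions of the desired equality as images under two one-sided shifts that are mutually inverse on the boxes $[0,n]$, and to control the finitely many boxes on which $U$ differs from suitable preimage neighborhoods by means of Lemma~\ref{lemma2*}.

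First I would record the algebra of two shifts. Since $\tau$ is shift-continuous, $(B^{0}(\mathcal{B}_{k}),\tau)$ is a semitopological monoid with identity $e=(0,(0,0),0)$ and absorbing zero $0^{*}$. Put $p=(0,(0,0),1)$ and $q=(1,(0,0),0)$. The right translations $R_{p}\colon x\mapsto xp$ and $R_{q}\colon x\mapsto xq$ are continuous, being two-sided shifts $x\mapsto exb$ with $b\in\{p,q\}$. A direct computation in the Bruck extension gives, for every $n\in\w$ and $s\in\mathcal{B}_{k}$, the identities $R_{p}(0,s,n)=(0,s,n+1)$ and $R_{q}(0,s,n+1)=(0,s,n)$ (both valid for all $n\ge 0$, with no exceptional small indices), together with $0^{*}p=0^{*}q=0^{*}$. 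Thus on the level of box contents $R_{p}$ and $R_{q}$ are mutually inverse bijections between $[0,n]$ and $[0,n+1]$, and the asserted equality $[0,n+1]\cap U=([0,n]\cap U)\cdot p$ is exactly the statement that $(0,s,n)\in U\Leftrightarrow(0,s,n+1)\in U$ for all $s$.

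Next I would produce auxiliary neighborhoods. Using continuity of $R_{p}$ and $R_{q}$ at $0^{*}$ together with $0^{*}p=0^{*}q=0^{*}\in U$, and using local compactness to keep compact closures, I would choose open neighborhoods $V,V'\subseteq U$ of $0^{*}$ with compact closures such that $Vp\subseteq U$ and $V'q\subseteq U$. The \emph{crucial point} is that $U$, $V$ and $V'$ must agree on all but finitely many boxes: assuming $\overline{U}$ is compact (guaranteed after shrinking $U$ by local compactness), Lemma~\ref{lemma2*} supplies a number $N\in\w$ with $U\cap[0,m]=V\cap[0,m]=V'\cap[0,m]$ for every $m>N$.

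Finally I would read off the two inclusions for $n>N$. If $(0,s,n)\in U$ then, as $n>N$, also $(0,s,n)\in V$, so $(0,s,n+1)=R_{p}(0,s,n)\in Vp\subseteq U$; this gives $([0,n]\cap U)\cdot p\subseteq[0,n+1]\cap U$. Conversely, if $(0,s,n+1)\in U$ then, as $n+1>N$, also $(0,s,n+1)\in V'$, so $(0,s,n)=R_{q}(0,s,n+1)\in V'q\subseteq U$ and hence $(0,s,n+1)=(0,s,n)\cdot p\in([0,n]\cap U)\cdot p$; this gives the reverse inclusion. Taking $n_{U}=N$ finishes the proof. I expect the main obstacle to be precisely the finiteness of the exceptional boxes: it is here that compactness is indispensable, since a priori the witnesses $(0,s_{n},n)$ to a failure of either inclusion lie in pairwise distinct boxes and, by Lemma~\ref{lemma1}, can accumulate only at $0^{*}$; as they are chosen outside an open neighborhood of $0^{*}$, they form a closed discrete set, so some compactness on the side of $U$ (via Lemma~\ref{lemma2*}) is exactly what rules out infinitely many of them.
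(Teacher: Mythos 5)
Your proof is correct and takes essentially the same route as the paper's: use shift-continuity of the right translations by $(0,(0,0),1)$ and $(1,(0,0),0)$ at $0^*$ to get a neighborhood (the paper uses a single $V\subset U$ where you use two) whose images land in $U$, invoke Lemma~\ref{lemma2*} to make $U$ and the auxiliary neighborhood(s) agree on all but finitely many boxes, and then read off the two inclusions exactly as you do. Your explicit attention to the compact-closure hypothesis of Lemma~\ref{lemma2*} is a point the paper glosses over, and is welcome.
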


\begin{proof}
Since $0^{*}\cdot (0,(0,0),1)=0^{*}=0^*\cdot(1,(0,0),0)$ the shift-continuity of the topology $\tau$ yields a neighborhood $V\subset U$ of $0^{*}$ such that $V\cdot (0,(0,0),1)\subset U$ and $V\cdot(1,(0,0),0)\subset U$. By Lemma~\ref{lemma2*}, there exists $n_U\in\w$ such that $V\cap[0,n]=U\cap[0,n]$ for all $n>n_U$.

We claim that $([0,n]\cap U)\cdot (0,(0,0),1)=[0,n+1]\cap U$ for every $n>n_U$. Indeed, for any $(0,(a,b),n)\in [0,n]\cap U=[0,n]\cap V$ we get $(0,(a,b),n)\cdot(0,(0,0),1)=(0,(a,b),n+1)\in [0,n+1]\cap(V\cdot (0,(0,0),1))\subset [0,n+1]\cap U$ and hence $([0,n]\cap U)\cdot (0,(0,0),1)\subset[0,n+1]\cap U$.

On the other hand, for every element $(0,(a,b),n+1)\in [0,n+1]\cap U=[0,n+1]\cap V$ we have $$(0,(a,b),n)=(0,(a,b),n+1)\cdot(1,(0,0),0)\in V\cdot(1,(0,0),0)\subset U$$ and thus $(0,(a,b),n+1)=(0,(a,b),n)\cdot(0,(0,0),1)\in ([0,n]\cap U)\cdot(0,(0,0),1)$. So, $[0,n+1]\cap U\subset ([0,n]\cap U)\cdot(0,(0,0),1)$.
\end{proof}




\begin{lemma}\label{lemma9}
Assume that $0^*$ is not isolated in some locally compact shift-continuous topology $\tau$ on $B^{0}(\mathcal{B}_{k})$. Then for each  neighborhood $U\in\tau$ of $0^{*}$ there exists a number $n_{U}\in\w$ such that $[0,m]\subset U$, for every $m>n_{U}$.
\end{lemma}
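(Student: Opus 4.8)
The plan is to fix a neighborhood $U$ of $0^*$ and prove two complementary facts: that membership of a point $(0,s,m)$ in $U$ becomes independent of $m$ once $m$ is large, and that \emph{every} middle coordinate $s\in\mathcal B_k$ eventually occurs. Together these force the entire box $[0,m]$ into $U$ for all large $m$. The convenient way to package the first fact is to isolate the cross-section $S_U:=\{s\in\mathcal B_k:(0,s,m)\in U\}$ and argue that it does not depend on $m$.

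First I would extract the $m$-independence from Lemma~\ref{lemma8}. Right multiplication by $(0,(0,0),1)$ sends $(0,s,n)$ to $(0,s,n+1)$ whenever $n\ge1$, and hence is a bijection of $[0,n]$ onto $[0,n+1]$ fixing the middle coordinate $s$. Consequently the identity $[0,n+1]\cap U=([0,n]\cap U)\cdot(0,(0,0),1)$ supplied by Lemma~\ref{lemma8} translates into the equivalence $(0,s,n)\in U\Leftrightarrow(0,s,n+1)\in U$ for all $n>n_U$. Iterating, the set $S_U$ is one and the same for every $m>n_U$, so that $[0,m]\cap U=\{(0,s,m):s\in S_U\}$ for all $m>n_U$. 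Since $[0,m]\subset U$ is equivalent to $S_U=\mathcal B_k$, the whole statement reduces to proving $S_U=\mathcal B_k$.

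To show $S_U=\mathcal B_k$, fix an arbitrary $t\in\mathcal B_k$. As $0^*$ is a zero, $(0,t,0)\cdot 0^*=0^*\in U$, so the shift-continuity of $\tau$ yields a neighborhood $V$ of $0^*$ with $(0,t,0)\cdot V\subset U$. Because $0^*$ is non-isolated, Lemma~\ref{lemma7} applies to $V$ and provides a number $M$ with $(0,(0,0),m)\in V$ for all $m>M$. Using $(0,t,0)\cdot(0,(0,0),m)=(0,t,m)$, we obtain $(0,t,m)\in U$ for every $m>M$; choosing $m>\max\{M,n_U\}$ and invoking the $m$-independence of $S_U$ established above gives $t\in S_U$. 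Since $t$ was arbitrary, $S_U=\mathcal B_k$, and therefore $[0,m]\subset U$ for all $m>n_U$.

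The main obstacle is uniformity: obtaining $(0,t,m)\in U$ for one $t$ at a time is routine, but a priori the needed threshold could grow with $t$, which would not produce a single $n_U$ serving the entire box simultaneously. This is exactly what Lemma~\ref{lemma8} overcomes, by pinning the cross-section $S_U$ so that it cannot vary with $m$; once that rigidity is in place, it suffices to reach each $t$ at a single large value of $m$, and the threshold $n_U$ from Lemma~\ref{lemma8} does the uniform bookkeeping.
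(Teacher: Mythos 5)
Your proposal is correct and follows essentially the same route as the paper: Lemma~\ref{lemma8} supplies the rigidity of the cross-section $[0,m]\cap U$ as $m$ varies beyond $n_U$, and Lemma~\ref{lemma7} together with the translate $(0,t,0)\cdot V\subset U$ plants each middle coordinate $t$ at some large level. The paper packages the same two ingredients as a proof by contradiction (descending from a level $n$ where $(0,(a,b),n)\in U$ back down to the offending level $m$), whereas you phrase the descent as $m$-independence of the set $S_U$; the difference is purely presentational.
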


\begin{proof} By Lemma~\ref{lemma8} there exists a number $n_{U}\in\w$ such that $[0,m+1]\cap U=([0,m]\cap U)\cdot(0,(0,0),1)$ for every $m>n_{U}$. We claim that $[0,m]\subset U$ for all $m>n_U$. To derive a contradiction, assume that $[0,m]\not\subset U$ for some $m>n_U$ and fix an element $(0,(a,b),m)\in[0,m]\setminus U$.

Since $(0,(a,b),0)\cdot0^{*}=0^{*}$ the shift-continuity of the topology $\tau$ yields a neighborhood $V\subset U$ of $0^{*}$ such that $(0,(a,b),0)\cdot V\subset U$. By Lemma~\ref{lemma7}, there exists a number $n>m$ such that $(0,(0,0),n)\in V$.

Then $(0,(a,b),n)=(0,(a,b),0)\cdot(0,(0,0),n)\in (0,(a,b),0)\cdot V\subset U$. Taking into account that $[0,n]\cap U=([0,n-1]\cap U)\cdot (0,(0,0),1)$ and $(0,(a,b),n-1)$ is a unique element of $[0,n-1]$ with $(0,(a,b),n)=(0,(a,b),n-1)\cdot(0,(0,0),1)$, we conclude that $(0,(a,b),n-1)\in U$. Proceeding by induction, we can show that $(0,(a,b),n-i)\in U$ for $i=n-m$, which contradicts the choice of the element $(0,(a,b),m)\notin U$.
\end{proof}

\begin{lemma}\label{lemma11}
Let $\tau$ be a locally compact shift-continuous topology $\tau$ on $B^{0}(\mathcal{B}_{k})$. If $0^*$ is not isolated in the topology $\tau$, then for every neighborhood $U\in\tau$ of $0^{*}$ and every $n\in\w$ there exists $t_{n}\in\mathbb{N}$ such that $[n,m]\subset U$, for all $m>t_{n}$.
\end{lemma}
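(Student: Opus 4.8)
The plan is to deduce the general statement from its special case $n=0$, which is precisely Lemma~\ref{lemma9}, by transporting that case along a left translation. The key observation is that left multiplication by the element $(n,(0,0),0)$ carries the box $[0,m]$ bijectively onto the box $[n,m]$. Indeed, for every $s\in\mathcal{B}_{k}$ the Bruck multiplication falls into the case of equal middle indices ($0=0$), so
$(n,(0,0),0)\cdot(0,s,m)=(n,(0,0)\star s,m)=(n,s,m)$;
letting $s$ range over $\mathcal{B}_{k}$ shows that this left shift maps $[0,m]$ \emph{onto} $[n,m]$ for each $m\in\w$. Moreover, since $0^{*}$ is the adjoined zero we have $(n,(0,0),0)\cdot 0^{*}=0^{*}$, so the shift fixes the point of interest.

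First I would fix $n\in\w$ and a neighborhood $U\in\tau$ of $0^{*}$, and consider the left shift $\lambda\colon B^{0}(\mathcal{B}_{k})\to B^{0}(\mathcal{B}_{k})$ given by $\lambda(x)=(n,(0,0),0)\cdot x$. Because $\tau$ is shift-continuous, the translation $\lambda$ is continuous (it is a separately continuous semigroup operation evaluated at a fixed left argument), so $V:=\lambda^{-1}(U)$ is open; and since $\lambda(0^{*})=0^{*}\in U$, the set $V$ is a neighborhood of $0^{*}$. Applying Lemma~\ref{lemma9} to $V$ (using that $0^{*}$ is non-isolated, which is our hypothesis), I obtain a number $t_{n}$, which we may take in $\mathbb{N}$, such that $[0,m]\subset V$ for every $m>t_{n}$.

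Finally, for every $m>t_{n}$ the inclusion $[0,m]\subset V=\lambda^{-1}(U)$ together with surjectivity of $\lambda$ on boxes gives $[n,m]=\lambda([0,m])\subset \lambda(V)\subset U$, which is exactly the desired conclusion. Almost all of the real work is already contained in Lemma~\ref{lemma9}; the only step needing genuine care is the verification that $\lambda$ maps $[0,m]$ \emph{onto} the whole box $[n,m]$ rather than merely into it, since it is precisely surjectivity that allows the inclusion $[0,m]\subset V$ to be pushed forward to $[n,m]\subset U$. Beyond this, the one thing to keep straight is that $t_{n}$ is permitted to depend on both $U$ and $n$ through the auxiliary neighborhood $V$, so no uniformity over $n$ is required and there is no further obstruction.
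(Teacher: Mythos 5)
Your proposal is correct and follows essentially the same route as the paper: both reduce to Lemma~\ref{lemma9} by left-translating by $(n,(0,0),0)$, which fixes $0^{*}$ and carries $[0,m]$ onto $[n,m]$; your choice $V=\lambda^{-1}(U)$ is just the maximal neighborhood with $(n,(0,0),0)\cdot V\subset U$ that the paper obtains from shift-continuity. The surjectivity check $(n,(0,0),0)\cdot(0,s,m)=(n,s,m)$ that you flag is indeed the only computation needed, and it holds since $(0,0)$ is the identity of $\mathcal{B}_{k}$.
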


\begin{proof}
Since $(n,(0,0),0)\cdot0^{*}=0^{*}$ the shift-continuity of the topology $\tau$ yields a neighborhood $V\in\tau$ of $0^{*}$ such that $(n,(0,0),0)\cdot V\subset U$. By Lemma~\ref{lemma9}, for the neighborhood $V$ of $0^{*}$ there exists a number $t_n\in\w$ such that $[0,m]\subset V$ for all $m>t_n$. Then for every $m>t_n$ we obtain the desired inclusion $[n,m]=(n,(0,0),0)\cdot[0,m]\subset (n,(0,0),0)\cdot V\subset U$.
\end{proof}

\begin{proposition}\label{proposition4} Let $\tau$ be a locally compact shift-continuous topology on $B^{0}(\mathcal{B}_{k})$. If $0^*$ is not isolated in the topology $\tau$, then each neighborhood $U\in\tau$ of $0^*$ contains all but finitely many boxes $[n,m]$ with $n,m\in\w$.
\end{proposition}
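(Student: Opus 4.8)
The plan is to argue by contradiction and reduce to the situation of a neighborhood with compact closure. First, using local compactness, I would replace $U$ by a smaller open neighborhood of $0^*$ with compact closure $\overline{U}$; since proving the statement for such a $U$ implies it for any larger neighborhood, it suffices to treat this case. So assume, for a contradiction, that the set $D=\{(n,m)\in\w\times\w:[n,m]\not\subset U\}$ is infinite. By Lemma~\ref{lemma11}, for each fixed $n$ the set $\{m:(n,m)\in D\}$ is finite, so $D$ meets infinitely many distinct rows; I would choose bad rows $n_1<n_2<\cdots$ and for each $j$ set $t_j:=\max\{m:(n_j,m)\in D\}$, so that $[n_j,t_j]\not\subset U$ while $[n_j,t_j+1]\subset U$. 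Then I would fix a point $p_j\in[n_j,t_j]\setminus U$ and pass to the point lying directly above it, $a_j:=p_j\cdot(0,(0,0),1)=(n_j,s_j,t_j+1)$, noting that $a_j\in[n_j,t_j+1]\subset U\subset\overline{U}$ and, conversely, $a_j\cdot(1,(0,0),0)=p_j$.

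The heart of the argument is to prove that $a_j\to 0^*$. The $a_j$ are pairwise distinct, since their first coordinates $n_j$ are strictly increasing, so by compactness of $\overline{U}$ the sequence $(a_j)$ has an accumulation point in $\overline{U}$, and I would show this point must be $0^*$. Indeed, any $z\in B(\mathcal{B}_k)$ lies in a unique box $[p,q]$, and by Lemma~\ref{lemma2} the set $O_z:=[p,q]\cup\{(p+1,(0,0),q+1)\}$ is an open neighborhood of $z$; every element of $O_z$ has first coordinate $p$ or $p+1$, so since $n_j\to\infty$ only finitely many $a_j$ lie in $O_z$, whence $z$ is not an accumulation point of $(a_j)$. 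Thus $0^*$ is the only candidate. To upgrade this to genuine convergence I would argue that if some neighborhood $W$ of $0^*$ omitted infinitely many $a_j$, then those points would lie in the compact set $\overline{U}\setminus W$ and hence accumulate at some point of $\overline{U}\setminus W\subset B(\mathcal{B}_k)$, contradicting the previous observation; therefore every neighborhood of $0^*$ contains all but finitely many $a_j$, i.e. $a_j\to 0^*$.

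Finally, since $\tau$ is shift-continuous the right shift $x\mapsto x\cdot(1,(0,0),0)$ is continuous, and $0^*\cdot(1,(0,0),0)=0^*$ because $0^*$ is a zero; applying this shift to $a_j\to 0^*$ gives $p_j=a_j\cdot(1,(0,0),0)\to 0^*$, so all but finitely many $p_j$ lie in $U$, contradicting $p_j\notin U$. This contradiction forces $D$ to be finite, which is the assertion. I expect the main obstacle to be exactly the convergence claim $a_j\to 0^*$: this is where local compactness is used, and it relies on carefully combining the clopenness of the boxes supplied by Lemma~\ref{lemma2} with the compactness of $\overline{U}$ to exclude accumulation inside any single box. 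Once that is in place, the concluding shift-continuity step is routine.
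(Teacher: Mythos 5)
Your proof is correct. It shares the paper's overall skeleton -- argue by contradiction, invoke Lemma~\ref{lemma11} to see that each row contributes only finitely many bad boxes, pick the maximal bad column index in each of infinitely many bad rows, and exploit the right shift by $(1,(0,0),0)$ -- but the mechanism by which you extract the contradiction is genuinely different. The paper applies shift-continuity \emph{first} to get a neighborhood $V$ with $V\cdot(1,(0,0),0)\subset U$, and then uses Lemma~\ref{lemma2*} to find a single box $[n,m_n+1]$ on which $U$ and $V$ agree, pulling $[n,m_n]$ back into $U$. You instead bypass Lemma~\ref{lemma2*} entirely: you build the sequence $a_j=(n_j,s_j,t_j+1)\in U$ sitting above the bad points $p_j\notin U$, prove $a_j\to 0^*$ by combining compactness of $\overline{U}$ with the clopen box neighborhoods of Lemma~\ref{lemma2} (which rule out cluster points in $B(\mathcal{B}_k)$ since the first coordinates $n_j$ escape every box), and only then apply continuity of the right shift to conclude $p_j\to 0^*$, contradicting $p_j\notin U$. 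All the individual steps check out: the multiplication identities $p_j\cdot(0,(0,0),1)=a_j$ and $a_j\cdot(1,(0,0),0)=p_j$ hold in both the $t_j>0$ and $t_j=0$ cases, the reduction to a $U$ with compact closure is legitimate, and the upgrade from "$0^*$ is the only possible cluster point" to genuine convergence via compactness of $\overline{U}\setminus W$ is sound. What your route buys is independence from Lemma~\ref{lemma2*} at this stage (you essentially re-prove the relevant fragment of it in the form of the convergence claim); what the paper's route buys is brevity, since Lemma~\ref{lemma2*} has already packaged exactly the compactness argument you carry out by hand.
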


\begin{proof} To derive a contradiction, assume that for some neighborhood $U\in\tau$ of $0^{*}$ the set $A=\{(n,m)\in\w\times\w:[n,m]\not\subset U\}$ is infinite.
By Lemma~\ref{lemma11}, for every $n\in\w$ the set $A_n=\{m\in\w:[n,m]\not\subset U\}$ is finite. Since $A=\bigcup_{n\in\w}\{n\}\times A_n$ is infinite, the set $B=\{n\in\w:A_n\ne\emptyset\}$ is infinite. For every $n\in B$ let $m_n=\max A_n$.

Since $0^{*}\cdot(1,(0,0),0)=0^{*}$, the shift-continuity of the topology $\tau$ yields a neighborhood $V\in\tau$ of $0^{*}$ such that  $V\cdot(1,(0,0),0)\subset U$. Using Lemma~\ref{lemma2*}, we can find a number $n\in B$ such that $V\cap [n,m_n+1]=U\cap [n,m_n+1]$. Taking into account that $m_n+1\notin A_n$, we conclude that $[n,m_n+1]=[n,m_n+1]\cap U=[n,m_n+1]\cap V\subset V$ and hence
$[n,m_n]=[n,m_n+1]\cdot(1,(0,0),0)\subset V\cdot(1,(0,0),0)\subset U$, which contradicts the inclusion $m_n\in A_n$.
\end{proof}

The following Theorem generalizes Theorem~1 of Gutik~\cite{Gutik-2015}.

\begin{theorem}\label{theorem2} A locally compact shift-continuous topology $\tau$ on the monoid $B^{0}(\mathcal{B}_{k})$ is compact if the point $0^*$ is not isolated in the topology $\tau$.
\end{theorem}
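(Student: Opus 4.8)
The plan is to prove the theorem by induction on $k$, reducing compactness of the whole space $B^{0}(\mathcal{B}_{k})$ to compactness of each box together with its unique accumulation point. The key structural observation is that for every $n,m\in\w$ the set $T_{n,m}:=[n,m]\cup\{(n+1,(0,0),m+1)\}$ is a subsemigroup isomorphic to $B^{0}(\mathcal{B}_{k-1})$, with $(n+1,(0,0),m+1)$ serving as the adjoined zero. Indeed, each box $[n,m]$ is a copy of $\mathcal{B}_{k}$, which by Theorem~\ref{theorem1} is isomorphic to $B(\mathcal{B}_{k-1})$, and a direct computation shows that $(n+1,(0,0),m+1)$ annihilates $[n,m]$ on both sides, so adjoining it yields $B^{0}(\mathcal{B}_{k-1})$.

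Next I would equip each $T_{n,m}$ with the subspace topology inherited from $\tau$ and verify the hypotheses of the inductive statement. By Lemma~\ref{lemma2} the set $T_{n,m}$ is closed, hence a locally compact Hausdorff space; being a subsemigroup, its two-sided shifts are restrictions of the continuous shifts of $(B^{0}(\mathcal{B}_{k}),\tau)$, so the subspace topology is shift-continuous. It then remains to check that the zero of each $T_{n,m}$ is non-isolated in the subspace. Since $0^{*}$ is non-isolated, Proposition~\ref{proposition3} guarantees that $(1,(0,0),1)$ is non-isolated in $B^{0}(\mathcal{B}_{k})$; as $T_{0,0}$ is by Lemma~\ref{lemma2} an open neighborhood of this point whose remaining points all lie in $[0,0]$, the point is non-isolated already in the subspace $T_{0,0}$, and transporting along the shift homeomorphisms $T_{0,0}\to T_{n,m}$ shows the same for every $T_{n,m}$. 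Applying the inductive hypothesis when $k\ge 2$, or Gutik's Theorem~1 of~\cite{Gutik-2015} to the bicyclic-with-zero monoid $B^{0}(\mathcal{B}_{0})$ when $k=1$, I conclude that each $T_{n,m}$ is compact, so every box $[n,m]$ has compact closure.

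Finally I would assemble a finite subcover. Given an open cover $\mathcal{W}$ of $B^{0}(\mathcal{B}_{k})$, choose $W_{0}\in\mathcal{W}$ with $0^{*}\in W_{0}$. By Proposition~\ref{proposition4} the neighborhood $W_{0}$ contains all but finitely many boxes, say it omits only $[n_{1},m_{1}],\dots,[n_{p},m_{p}]$; then $B^{0}(\mathcal{B}_{k})\setminus W_{0}\subseteq\bigcup_{i=1}^{p}[n_{i},m_{i}]\subseteq\bigcup_{i=1}^{p}T_{n_{i},m_{i}}$, a finite union of compact sets and hence compact. Covering this compact set by finitely many members $W_{1},\dots,W_{q}$ of $\mathcal{W}$ and adjoining $W_{0}$ produces a finite subcover of $B^{0}(\mathcal{B}_{k})$, which proves compactness.

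I expect the main obstacle to be the inductive identification of $T_{n,m}$ with $B^{0}(\mathcal{B}_{k-1})$ together with the verification that its adjoined zero survives as a non-isolated point of the subspace topology: non-isolatedness in the ambient space does not a priori persist upon restriction, and it is precisely the open-and-closed property of Lemma~\ref{lemma2} (which forces the nearby points to stay inside the box) and Proposition~\ref{proposition3} that close this gap. Once the compactness of the boxes is secured, the reduction via Proposition~\ref{proposition4} is routine.
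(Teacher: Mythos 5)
Your overall architecture matches the paper's proof (induction on $k$ with Gutik's theorem as the base case, Proposition~\ref{proposition3} to make $(1,(0,0),1)$ non-isolated, compactness of the sets $[n,m]\cup\{(n+1,(0,0),m+1)\}$, and Proposition~\ref{proposition4} to finish), but your ``key structural observation'' is false for $n\neq m$. The box $[n,m]$ is not closed under the Bruck-extension multiplication when $n\ne m$: for $(n,s,m),(n,t,m)\in[n,m]$ one has $(n,s,m)\cdot(n,t,m)=(2n-m,t,m)$ if $m<n$ and $(n,s,2m-n)$ if $m>n$, which leaves $[n,m]$. Likewise $(n+1,(0,0),m+1)$ does not annihilate $[n,m]$: e.g.\ $(1,(0,0),2)\cdot(0,s,1)=(1,(0,0),3)$. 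So the ``direct computation'' you defer would actually refute the claim, and $T_{n,m}$ is a submonoid isomorphic to $B^{0}(\mathcal{B}_{k-1})$ only when $n=m$. Consequently you cannot apply the inductive hypothesis to each $T_{n,m}$ separately, which is how you propose to get compactness of all the boxes.

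The repair is short and is exactly what the paper does: apply the inductive hypothesis only to $S=T_{0,0}=[0,0]\cup\{(1,(0,0),1)\}$, which genuinely is a closed submonoid isomorphic to $B^{0}(\mathcal{B}_{k-1})$ with non-isolated zero (by Proposition~\ref{proposition3}), hence compact; then transport \emph{compactness} --- not merely non-isolatedness of the adjoined point, which is all you transport --- to every $T_{n,m}$ via the shift homeomorphisms of Proposition~\ref{proposition1} and \cite[Proposition 5]{Gutik-Pavlyk-2009}. Since you already invoke those homeomorphisms, this costs you nothing; with that substitution the rest of your argument, including the final assembly of a finite subcover from $W_0$ and the finitely many omitted boxes, goes through as in the paper. (A cosmetic point: your induction as stated starts at $k=1$, so you should note that $k=0$ is literally Gutik's theorem, as the paper does.)
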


\begin{proof} This theorem will be prove by induction on $k\in\w$. For $k=0$ the statement is true by \cite[Theorem 1]{Gutik-2015}. Assume that for some $k\in\mathbb N$ we have proved that each locally compact shift-continuous topology with non-isolated zero on $B^0(\mathcal B_{k-1})$ is compact. Fix a locally compact shift-continuous topology $\tau$ on the monoid $B^0(\mathcal B_{k})$ such that the point $0^*$ is not isolated. By Proposition~\ref{proposition3}, the point $(1,(0,0),1)$ is not isolated in the topology $\tau$. Observe that the set $S:=[0,0]\cup\{(1,(0,0),1)\}\subset B^{0}(\mathcal{B}_{k})$ is a submonoid isomorphic to $B^{0}(\mathcal{B}_{k-1})$, where element $(1,(0,0),1)\in S$ plays the role of zero of $B^{0}(\mathcal{B}_{k-1})$. By the inductive hypothesis, the semitopological monoid $S=[0,0]\cup\{(1,(0,0),1)\}$ is compact. By \cite[Proposition 5]{Gutik-Pavlyk-2009} and Proposition~\ref{proposition1}, for any numbers $n,m\in\w$ the set $[n,m]\cup\{(n+1,(0,0),m+1)\}$ is compact in the topology $\tau$.

To see that the topology $\tau$ is compact, take any open cover $\mathcal U\subset\tau$ of $B^0(\mathcal B_{k})$ and find a set $U\in\mathcal U$ containing $0^*$. By Proposition~\ref{proposition4}, the set $A=\{(n,m)\in\omega\times\omega:[n,m]\not\subset U\}$ is finite.
For every $(n,m)\in A$ choose a finite subcover $\mathcal U_{n,m}\subset\mathcal U$ of the compact set $[n,m]\cup\{(n+1,(0,0),m+1)\}$ and observe that $\{U\}\cup\bigcup_{(n,m)\in A}\mathcal U_{(n,m)}\subset \mathcal U$ is a finite cover of $B^0(\mathcal B_k)$, witnessing that the topology $\tau$ is compact.
\end{proof}

According to Lemma~\ref{lemma1} and Proposition~\ref{proposition1}, to define a shift-continuous topology on the monoid $\mathcal{B}_{\alpha}$ for $\alpha\le\w$, it is sufficient to define open neighborhood bases of the points $(\omega^{n},\omega^{n})$ for every positive integer $n<\alpha$.

For every ordinals $\alpha\le\w$ and $0<i\le \alpha$ consider the locally compact Hausdorff shift-continuous topology $\tau_{i,\alpha}$ on the $\alpha$-bicyclic monoid $\mathcal B_\alpha$ uniquely determined by the following two conditions:
\begin{itemize}
\item[(i)] for every integer $j$ with $i\le j<\alpha$ the point $(\omega^{j},\omega^{j})$ is isolated in the topology $\tau_{i,\alpha}$;
\item[(ii)] for every positive integer $j$ with $j<i$ a neighborhood base of the topology $\tau_{i,\alpha}$ at  $(\omega^{j},\omega^{j})$consists of the sets
$$U_n=\{(a,b):n\omega^{j-1}<a<\omega^{j}\hbox{ or }n\omega^{j-1}<b<\omega^{j}\}\cup\{(\omega^j,\omega^j)\},\;n\in\mathbb N.$$
\end{itemize}
Simple verifications show that $(\mathcal{B}_{\alpha},\tau_{i,\alpha})$ is a locally compact Hausdorff semitopological semigroup.

\begin{theorem}\label{theorem3} Let $\alpha\le\w$. Each locally compact shift-continuous topology $\tau$ on the monoid $\mathcal B_\alpha$ coincides with the topology $\tau_{i,\alpha}$ for some non-zero ordinal $i\le\alpha$.
\end{theorem}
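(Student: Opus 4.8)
The plan is to exploit the remark preceding the theorem: by Lemma~\ref{lemma1} and Proposition~\ref{proposition1} the topology $\tau$ is completely determined by the neighbourhood filters of the diagonal idempotents $(\omega^{n},\omega^{n})$ with $1\le n<\alpha$, so it suffices to locate an ordinal $i$ and to check that $\tau$ and $\tau_{i,\alpha}$ have the same neighbourhood filter at each such point. First I would pin down $i$ using Corollary~\ref{corollary1}: the set of positive integers $n<\alpha$ for which $(\omega^{n},\omega^{n})$ is isolated is upward closed, so setting $i$ equal to its least element (and $i:=\alpha$ when it is empty) produces a non-zero ordinal $i\le\alpha$ with $(\omega^{n},\omega^{n})$ isolated if and only if $i\le n<\alpha$. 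This is exactly condition (i) in the definition of $\tau_{i,\alpha}$, so only condition (ii) remains.

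Next I would analyse a fixed non-isolated idempotent $(\omega^{j},\omega^{j})$ with $1\le j<i$. Put $S:=\{(a,b):a,b<\omega^{j}\}\cup\{(\omega^{j},\omega^{j})\}$. Using Lemma~\ref{lemma1} I would verify that $\{(a,b):a,b<\omega^{j}\}$ is open (the neighbourhood $V_{(x,y)}$ of any of its points keeps the leading Cantor terms and so stays below $\omega^{j}$) and that $S=V_{(\omega^{j},\omega^{j})}$ is a neighbourhood of $(\omega^{j},\omega^{j})$; hence $S$ is open in $\mathcal B_{\alpha}$, and $(S,\tau|_{S})$ is locally compact and shift-continuous. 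By Theorem~\ref{theorem1}, $S$ is isomorphic to $B^{0}(\mathcal B_{j-1})$ with zero $0^{*}=(\omega^{j},\omega^{j})$, under which the box $[n,m]$ corresponds to $\{(a,b):a\in[n\omega^{j-1},(n+1)\omega^{j-1}),\ b\in[m\omega^{j-1},(m+1)\omega^{j-1})\}$ and $(n+1,(0,0),m+1)$ corresponds to $((n+1)\omega^{j-1},(m+1)\omega^{j-1})$. Since $S$ is open, $0^{*}$ is non-isolated in $\tau|_{S}$, so all the results proved for $B^{0}(\mathcal B_{j-1})$ are available.

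Finally I would show that the traces on $S$ of the sets $U_{n}$ form a neighbourhood base at $(\omega^{j},\omega^{j})$, in two directions. For ``each $U_{n}$ is a neighbourhood'', Lemma~\ref{lemma2} makes every $[k,l]\cup\{(k+1,(0,0),l+1)\}$ open-and-closed, so the finite union $\bigcup_{k,l\le n}([k,l]\cup\{(k+1,(0,0),l+1)\})$ is clopen; its complement in $S$ is an open neighbourhood of $0^{*}$ in which every non-zero point has a coordinate $\ge(n+1)\omega^{j-1}>n\omega^{j-1}$, hence lies in $U_{n}$. For the converse, Proposition~\ref{proposition4} applied to $\tau|_{S}$ shows that a neighbourhood $U$ of $0^{*}$ contains every box outside some finite set $F$; choosing $N$ larger than every index occurring in $F$ gives $S\setminus U\subseteq\bigcup_{(k,l)\in F}[k,l]\subseteq\{(a,b):a\le N\omega^{j-1},\ b\le N\omega^{j-1}\}=S\setminus U_{N}$, so $U_{N}\cap S\subseteq U$. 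Thus the neighbourhood filters of $\tau$ and $\tau_{i,\alpha}$ coincide at every diagonal idempotent, and therefore $\tau=\tau_{i,\alpha}$.

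I expect the main obstacle to be the passage to the Bruck-extension picture in the third step: one must check that $S$ is genuinely open (so that $(S,\tau|_{S})$ is a \emph{locally compact} shift-continuous copy of $B^{0}(\mathcal B_{j-1})$ with non-isolated zero, making Lemma~\ref{lemma2} and Proposition~\ref{proposition4} applicable) and then translate the purely combinatorial statement ``all but finitely many boxes'' into the ordinal description of $U_{n}$. Once the openness of $S$ and the box-to-coordinate dictionary are established, both inclusions of filters are short, and the reduction to diagonal idempotents handles the remaining cases ($i=1$ giving the discrete topology, and, when $\alpha=\omega$, $i=\alpha$) uniformly.
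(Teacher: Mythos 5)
Your proposal is correct and follows essentially the same route as the paper: the same choice of $i$ via Corollary~\ref{corollary1}, the same identification of the clopen submonoid $S$ with $B^{0}(\mathcal B_{j-1})$, and the same appeal to Proposition~\ref{proposition4} (via the machinery behind Theorem~\ref{theorem2}) to pin down the neighbourhood filter at each non-isolated $(\omega^{j},\omega^{j})$, followed by the reduction to diagonal idempotents through Lemma~\ref{lemma1} and Proposition~\ref{proposition1}. If anything, you spell out more carefully than the paper does the two inclusions of neighbourhood filters (using Lemma~\ref{lemma2} for one direction and Proposition~\ref{proposition4} for the other), where the paper simply cites ``the proof of Theorem~\ref{theorem2}.''
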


\begin{proof} Let $i<\alpha$ be the smallest positive ordinal such that the point $(\w^i,\w^i)$ is isolated in the topology $\tau$. If such an ordinal does not exist, then put $i:=\alpha$. We claim that topology $\tau$ coincides with the topology $\tau_i$ at each point $(\w^j,\w^j)$ where  $0<j<\alpha$.

If $j\ge i$, then $i<\w$, the point $(\w^{i},\w^{i})$ is isolated in the topology $\tau$ and by Corollary~\ref{corollary1}, the point $(\omega^j,\omega^j)$ is isolated in the topology $\tau$.
So, the topologies $\tau$ and $\tau_i$ coincide at $(\w^j,\w^j)$ for $j\ge i$.

If $j<i$, then by the choice of $i$, the point $(\omega^{j},\omega^{j})$ is non-isolated in the topology $\tau$. By Lemma~\ref{lemma1}, the set $S:=\{(a,b):a,b<\omega^{j}\}\cup\{(\omega^{j},\omega^{j})\}$ is a closed-and-open in the topology $\tau$. This set is a submonoid of $\mathcal B_\alpha$, isomorphic to $B^0(\mathcal{B}_{j-1})$, where point $(\omega^{j},\omega^{j})$ is a zero of $B^0(\mathcal{B}_{j-1})$. Further we identify the monoid $S$ with $B^0(\mathcal{B}_{j-1})$.
Then by the proof of Theorem~\ref{theorem2},  a neighborhood base of the subspace topology of $S$ at $(\w^j,\w^j)$ consists of the sets $$U_n=\{(\w^j,\w^j)\}\cup\{(a,b):n\omega^{j-1}<a<\omega^{i}\hbox{ or }n\omega^{j-1}<b<\omega^{j}\}.$$ Consequently, the topologies $\tau$ and $\tau_i$ coincide at $(\w^j,\w^j)$. Since both topologies are shift-continuous, they coincide in all points of $\mathcal B_\alpha$ by Proposition~\ref{proposition1}.
 \end{proof}

Two partially ordered sets $(X,\le_X)$ and $(Y,\le_Y)$ are called {\em anti-isomorphic} if there exists a bijective map $f:X\to Y$ such that for any $x,y\in X$ the inequality $x\le_X y$ is equivalent to $f(y)\le_Y f(x)$.

Since $\tau_{j,\alpha}\subset \tau_{i,\alpha}$ for any $0<i<j\le \alpha$, we obtain the following corollary describing the lattice of shift-continuous topologies on $\alpha$-bicyclic monoids $\mathcal B_\alpha$.

\begin{corollary} For every $\alpha\le\w$ the lattice of all locally compact shift-continuous topologies on the $\alpha$-bicyclic monoid $\mathcal B_\alpha$ coincides with the set $\{\tau_{i,\alpha}:1\le i\le\alpha\}$, linearly ordered by the inclusion relation, and hence is anti-isomorphic to the segment of ordinals $[1,\alpha]$ endowed with the natural well-order.
\end{corollary}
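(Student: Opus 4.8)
The plan is to show that the assignment $f\colon[1,\alpha]\to\{\tau_{i,\alpha}:1\le i\le\alpha\}$ given by $f(i)=\tau_{i,\alpha}$ is an order-reversing bijection onto the collection of all locally compact shift-continuous topologies on $\mathcal B_\alpha$. I would first identify this collection with $\{\tau_{i,\alpha}:1\le i\le\alpha\}$. The inclusion $\supseteq$ is already in hand, since each $\tau_{i,\alpha}$ was observed to make $\mathcal B_\alpha$ a locally compact Hausdorff semitopological semigroup, hence is a locally compact shift-continuous topology; the reverse inclusion $\subseteq$ is exactly Theorem~\ref{theorem3}, which says that every such topology equals some $\tau_{i,\alpha}$ with $1\le i\le\alpha$. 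This gives surjectivity of $f$.

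The heart of the argument is the strict order-reversal: for $0<i<j\le\alpha$ one has $\tau_{j,\alpha}\subsetneq\tau_{i,\alpha}$. By Lemma~\ref{lemma1} and Proposition~\ref{proposition1} a shift-continuous topology on $\mathcal B_\alpha$ is completely determined by its neighborhood bases at the diagonal points $(\omega^m,\omega^m)$ with $0<m<\alpha$, so it suffices to compare $\tau_{i,\alpha}$ and $\tau_{j,\alpha}$ there. When $m<i$, both topologies use the same base $\{U_n:n\in\mathbb N\}$ prescribed by condition (ii) (a base depending only on $m$), and when $j\le m$ both isolate $(\omega^m,\omega^m)$; in either range the topologies agree. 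When $i\le m<j$, however, the point $(\omega^m,\omega^m)$ is isolated in $\tau_{i,\alpha}$ but is non-isolated, with neighborhood base $\{U_n\}$, in $\tau_{j,\alpha}$, so $\tau_{j,\alpha}$ has strictly fewer open sets around it. As $i<j\le\alpha$ forces $i<\alpha$, the point $(\omega^i,\omega^i)$ is a genuine element of $\mathcal B_\alpha$ witnessing $\tau_{j,\alpha}\subsetneq\tau_{i,\alpha}$.

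Finally I would assemble the conclusion. The strict order-reversal shows simultaneously that $f$ is injective and that its image is linearly ordered by inclusion, being the order-reversing image of the chain $[1,\alpha]$ (in particular a chain, hence a lattice). Combined with surjectivity, $f$ is a bijection satisfying $i\le j\iff\tau_{j,\alpha}\subseteq\tau_{i,\alpha}$, i.e.\ $i\le j\iff f(j)\subseteq f(i)$, which is precisely an anti-isomorphism onto the well-ordered segment $[1,\alpha]$. I expect the only delicate point to be the strict inclusion: one must verify that the bases at the shared non-isolated points genuinely coincide, so that passing from $i$ to $j$ introduces no spurious refinement, and then pin down a single witnessing point for strictness; the remaining assertions are routine bookkeeping.
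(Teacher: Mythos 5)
Your proposal is correct and follows essentially the same route as the paper: surjectivity of $i\mapsto\tau_{i,\alpha}$ comes from Theorem~\ref{theorem3} together with the observation that each $\tau_{i,\alpha}$ is locally compact and shift-continuous, and the anti-isomorphism comes from the chain of strict inclusions $\tau_{j,\alpha}\subsetneq\tau_{i,\alpha}$ for $i<j$, which the paper merely asserts and you verify pointwise at the diagonal elements $(\omega^m,\omega^m)$ using Lemma~\ref{lemma1} and Proposition~\ref{proposition1}. Your explicit check of strictness via the witness $(\omega^i,\omega^i)$ is a correct filling-in of a detail the paper leaves implicit.
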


{\bf Acknowledgement.} The author acknowledges professor Taras Banakh for his comments and suggestions.

\end{document}